\documentclass[a4paper]{amsart}

\usepackage{amsthm,amsfonts,amsmath, amssymb}
\usepackage{color}
\def\r{\mathbb{R}}

\usepackage{mathpazo}

\usepackage{geometry}                
\geometry{}                   
\usepackage{anysize}
\marginsize{3cm}{3cm}{3cm}{3cm}

\renewcommand{\baselinestretch}{1.25}
\setlength{\footnotesep}{\baselinestretch\footnotesep}
\setlength{\parindent}{0cm}
\setlength{\parskip}{2ex}

\usepackage[sort&compress,square,comma,numbers]{natbib}



\usepackage{graphicx,float,enumerate}
\usepackage[ruled,boxed,commentsnumbered,norelsize]{algorithm2e}
\usepackage{verbatim}
\usepackage{url}
\usepackage{epstopdf}
\usepackage[notref,notcite,final]{showkeys}

\usepackage[capitalise]{cleveref}

\newtheorem{theorem}{Theorem}
\newtheorem{corollary}[theorem]{Corollary}
\newtheorem{lemma}[theorem]{Lemma}

\newtheorem{rmk}[theorem]{Remark}

\crefname{conjecture}{Conjecture}{Conjectures}

\crefname{question}{Question}{Questions}

\theoremstyle{definition}

\theoremstyle{remark}

\usepackage{mathtools}

\DeclareMathOperator{\ver}{vert}

\newcommand{\h}{\frac{1}{2}}
\setcounter{section}{-1}
\begin{document}

\date{\today}
\title{Minimum number of edges of polytopes with $2d+2$  vertices}
\author{Guillermo Pineda-Villavicencio}
\author{Julien Ugon}
\author{David Yost}
\address{School of Information Technology, Deakin University, Geelong, Victoria 3220, Australia}
\email{\texttt{guillermo.pineda@deakin.edu.au}}
\address{School of Information Technology, Deakin University, Geelong, Victoria 3220, Australia}
\email{\texttt{julien.ugon@deakin.edu.au}}
\address{Centre for Informatics and Applied Optimisation, Federation University, Mt. Helen, Victoria 3350, Australia}
\email{\texttt{d.yost@federation.edu.au}}


%

\begin{abstract}
We define an analogue of the cube and an analogue of the 5-wedge in higher dimensions, each with $2d+2$ vertices and $d^2+2d-3$ edges. We show that these two are the only minimisers of the number of edges, amongst $d$-polytopes with $2d+2$ vertices, when $d=6$ or $d\ge8$. For $d=4, 5$ or 7, we also characterise the minimising polytopes; there are four sporadic examples in these dimensions. We announce a partial solution to the corresponding problem for polytopes with $2d+3$  vertices.
\end{abstract}

\maketitle

\section{Background: excess, taxonomy and decomposability}

This paper is concerned with  graphs of polytopes with not too many vertices. Throughout, we will denote the number of vertices and edges of a polytope $P$ by $v(P)$ and $e(P)$ respectively, or simply by $v$ and $e$ if $P$ is clear from the context. The set of vertices is denoted as usual by ${\rm Vert}(P)$. Different letters will be used for the names of individual vertices. The dimension of the ambient space is denoted by $d$.

In 1967, Gr\"unbaum \cite[Sec. 10.2]{Gru03} made a conjecture  concerning the minimum number of edges of $d$-polytopes with $v\le2d$ vertices, and confirmed it for $v\le d+4$. In \cite{PinUgoYos19}, we confirmed it for $v\leq2d$, and also  characterised the minimising polytope, which is unique for each $v$ (up to combinatorial equivalence). We also found the corresponding results for polytopes with $2d+1$ vertices. We extend this program here by calculating the minimum number of edges of polytopes with $2d+2$ vertices, also characterising the minimising polytopes. In the final section, we consider the case of $2d+3$ or more vertices.

An important concept in resolving Gr\"unbaum's conjecture was the {\it excess degree}. Recall that the {\it degree} of any vertex is the number of edges incident to it;  this cannot be less than the dimension of the ambient polytope. We defined the {\it excess degree} of a vertex $u$ as $\deg u-d$; thus a vertex is {\it simple} if its excess degree is zero. We then define the {\it excess} of a $d$-polytope $P$, denoted $\xi(P)$, as $\sum_{u\in \ver P}(\deg u-d)$, i.e. the sum of the excess degrees of its vertices. Thus a polytope is \textit{simple}, i.e. every vertex is simple, if $\xi(P)=0$. A vertex is {\it non-simple} in a $d$-polytope $P$ if its degree in $P$ is at least $d+1$. A polytope with at least one non-simple vertex is called {\it non-simple}. It is easy to see that
$$\xi(P)=2e(P)-dv(P).\eqno{(1)}$$

A fundamental result about the excess degree is that it cannot take arbitrary values \cite[Theorem 3.3]{PinUgoYos18}.

\begin{theorem}\label{thm:excess} Let $P$ be a non-simple $d$-polytope. Then $\xi(P)\ge d-2$. \end{theorem}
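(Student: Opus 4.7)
The plan is an induction on $d$. The base case $d = 3$ is immediate: any non-simple vertex has excess degree at least $1 = d - 2$.

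For the inductive step, pick a non-simple vertex $u$ and write $\deg_P(u) = d + k$ with $k \ge 1$. The vertex figure $Q := P/u$ is a $(d - 1)$-polytope whose vertices correspond bijectively to the neighbors of $u$ in $P$, so $v(Q) = d + k$. For a neighbor $w$ of $u$, let $w'$ denote the corresponding vertex of $Q$; then $\deg_Q(w')$ equals the number of $2$-faces of $P$ containing the edge $uw$, since edges of $Q$ correspond to $2$-faces of $P$ through $u$.

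The key step is to argue that each such $2$-face yields a distinct edge at $w$ other than $uw$. Indeed, in any $2$-face $F$ of $P$ through $uw$ (a polygon), $w$ has exactly one edge other than $uw$, say $wx_F$. Two distinct $2$-faces through $uw$ must produce distinct edges $wx_F$, because two edges meeting at the common vertex $w$ lie in at most one common $2$-face (equivalently, in the vertex figure at $w$, any two vertices are joined by at most one edge). Consequently $\deg_P(w) \ge 1 + \deg_Q(w')$, which rearranges to say that the excess degree of $w$ in $P$ is at least the excess degree of $w'$ in $Q$. Summing over the neighbors of $u$ and adding $u$'s own contribution yields
\[
\xi(P) \ge k + \xi(Q).
\]

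Two cases finish the argument. If $Q$ is non-simple, the inductive hypothesis gives $\xi(Q) \ge (d-1) - 2 = d - 3$, so $\xi(P) \ge 1 + (d-3) = d - 2$. If $Q$ is simple, then $\xi(Q) = 0$ and it suffices to show $k \ge d - 2$; here I would invoke the classical fact that a simple polytope which is not a simplex has at least $2\dim$ vertices. Since $v(Q) = d + k > d$, the polytope $Q$ is not a $(d-1)$-simplex, so $d + k \ge 2(d-1)$, giving $k \ge d - 2$ and hence $\xi(P) \ge k \ge d - 2$. The main obstacle I anticipate is the key inequality $\deg_P(w) \ge 1 + \deg_Q(w')$: the sketch above is short, but making it rigorous demands care about the two-way translation between incidences at $u$ (captured by the vertex figure $Q$) and the local structure at $w$. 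The simple-$Q$ case relies on the classification of simple polytopes with few vertices, which is non-trivial but well known.
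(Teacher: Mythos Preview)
The paper does not prove this theorem; it is quoted from \cite[Theorem~3.3]{PinUgoYos18} and used as a black box. So there is no in-paper proof to compare against. That said, your argument is essentially correct and gives a clean self-contained proof.

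The key inequality $\deg_P(w)\ge 1+\deg_Q(w')$ is sound: your justification via the vertex figure at $w$ is exactly right, since two distinct $2$-faces through $w$ cannot share two edges at $w$ (they would correspond to two distinct edges in $P/w$ with the same endpoints). One small point you glossed over: when you write ``summing over the neighbors of $u$ and adding $u$'s own contribution yields $\xi(P)\ge k+\xi(Q)$'', you are silently using that the vertices of $P$ which are neither $u$ nor adjacent to $u$ contribute non-negatively to $\xi(P)$; this is obvious but should be said. The simple-$Q$ case is also fine: the fact that a simple $(d-1)$-polytope other than a simplex has at least $2(d-1)$ vertices is classical (dually, a simplicial polytope other than a simplex has at least $2(d-1)$ facets, which follows e.g.\ from the Lower Bound Theorem, or indeed from \cref{lem:simple} in this paper).

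Your approach via the vertex figure is rather different in flavour from the original proof in \cite{PinUgoYos18}, which proceeds by a more structural analysis of how the nonsimple vertices sit inside facets. Your argument is shorter and more direct for the bare inequality $\xi(P)\ge d-2$, though it does not yield the finer structural information (about the two simple facets meeting in a simplex of dimension $0$ or $d-3$) that the paper later quotes and uses.
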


Recall that the Minkowski sum of two polytopes $Q,R$ is simply $Q+R=\{q+r:q\in Q, r\in R\}$. A {\it prism} based on a facet $F$ is the Minkowski sum of $F$ and a line segment, or any polytope combinatorially equivalent to it. The {\it simplicial $d$-prism} is any prism whose base is a $(d-1)$-simplex; we will often refer to these simply as prisms. Any $d$-dimensional simplicial prism has $2d$ vertices, $d^2$ edges, and $d+2$ facets.
For $m,n>0$, the polytope $\Delta_{m,n}$   is be defined as the Minkowski sum of an $m$-dimensional simplex and an $n$-dimensional simplex, lying in complementary subspaces. It is easy to see that it has dimension $m+n$, $(m+1)(n+1)$ vertices, $m+n+2$ facets, $\h(m+n)(m+1)(n+1)$ edges, and is simple. For $n=1$,  $\Delta_{d-1,1}$ is simply a prism. Being simple, all the polytopes just described have excess degree 0.

\begin{rmk}[Facets of $\Delta_{m,n}$]\label{rmk:Delta-Facets} The facets of the $(m+n)$-polytope $\Delta_{m,n}$, $m+n+2$ in total, are:
\begin{itemize}
\item $m+1$ copies of $\Delta_{m-1,n}$,
\item $n+1$ copies of $\Delta_{m,n-1}$.
\end{itemize}
In particular, $\Delta_{m,n}$ contains no simplex facets at all if $m\ge2$ or $n\ge2$, and no two facets of a simplex are disjoint.
\end{rmk}

A {\it triplex} is defined as a multifold pyramid over a simplicial prism. More precisely a $(k,d-k)$-triplex, denoted $M_{k,d-k}$ is a $(d-k)$-fold pyramid over the simplicial $k$-prism. We recall the quadratic polynomial defined by Gr\"unbaum \cite[p 184]{Gru03},
$$\phi(v,d)={d+1\choose2}+{d\choose2}-{2d+1-v\choose2}={v\choose2}-2{v-d\choose2}.$$

Note also the equivalent expression for $\phi$,
$$\phi(d+k,d)=\h d(d+k)+\h(k-1)(d-k).$$

The following result verifies Gr\"unbaum's conjecture about the minimum number of edges of $d$-polytopes with up to $2d$ vertices.

\begin{theorem}\label{thm:triplexes}\cite[Theorem 7]{PinUgoYos19} Let $P$ be a $d$-polytope with $d+k$ vertices, where $1\le k\le d$. Then $P$ has at least $\phi(d+k,d)={d\choose 2}-{k\choose2}+kd$ edges, equivalently $P$ has excess degree at least $(k-1)(d-k)$. Furthermore, equality is obtained only if $P$ is a $(k,d-k)$-triplex, i.e~ a $(d-k)$-fold pyramid over the simplicial $k$-prism.
\end{theorem}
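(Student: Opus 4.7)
The natural approach is induction on the dimension $d$, splitting into two subcases according to whether $P$ is a pyramid. The base case $d=2$ is immediate: a $v$-gon has $v$ edges, and $\phi(2+k,2)=2+k$ for $k\in\{1,2\}$.

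For the inductive step, assume the conclusion in all dimensions $<d$, and let $P$ be a $d$-polytope with $d+k$ vertices, $1\le k\le d$. I would first dispose of the pyramid case. If $P$ is a pyramid with apex $a$ over a $(d-1)$-polytope $Q$, then $v(Q)=(d-1)+k$ and the standard pyramid identities $v(P)=v(Q)+1$, $e(P)=e(Q)+v(Q)$ give
\begin{equation*}
\xi(P)=2e(P)-dv(P)=\xi(Q)+v(Q)-d=\xi(Q)+(k-1).
\end{equation*}
For $k\le d-1$ the inductive hypothesis applies to $Q$ and yields $\xi(Q)\ge(k-1)(d-1-k)$, whence $\xi(P)\ge(k-1)(d-k)$; equality forces $Q$ to be a $(k,d-1-k)$-triplex, so $P$ is a $(k,d-k)$-triplex. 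The boundary value $k=d$ falls outside the inductive scope on $Q$ and needs to be handled separately (essentially, a pyramid over a $(d-1)$-polytope with $2d-1$ vertices cannot give a $d$-prism, so equality cannot occur here).

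For the non-pyramid case I would try to prove strict inequality $\xi(P)>(k-1)(d-k)$. Pick a facet $F$ of $P$; since $P$ is not a pyramid, $F$ misses at least two vertices, so if $v(F)=(d-1)+j$ then $j\le k-1$ and the off-facet set $V_0:={\rm Vert}(P)\setminus F$ has $|V_0|=k+1-j\ge 2$. Applying the inductive hypothesis to $F$ gives
\begin{equation*}
e(F)\ge \binom{d-1}{2}-\binom{j}{2}+j(d-1),
\end{equation*}
and I would then bound $e(P)-e(F)$ from below by counting: each vertex in $V_0$ has degree $\ge d$ in $P$, and the subgraph induced on $V_0$ contributes at most $\binom{k+1-j}{2}$ of those edges, so many edges must cross between $F$ and $V_0$. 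A careful accounting, supplemented by Theorem \ref{thm:excess} applied to $F$ or to $P$ itself whenever a non-simple vertex is forced, should push the total excess strictly above $(k-1)(d-k)$, ruling out equality in the non-pyramidal setting.

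The main obstacle is clearly Case (ii). The bound $\xi(P)\ge d-2$ from Theorem \ref{thm:excess} is much weaker than the target $(k-1)(d-k)$ for $k$ near $d/2$, so the non-pyramidal structure must be exploited genuinely -- likely by choosing $F$ so that either $j$ is small (making $V_0$ large) or $F$ itself is forced to be non-simple. The most delicate sub-problem is $k=d$ (i.e.\ $v=2d$), where one must show the simplicial $d$-prism is the unique edge minimiser; this constitutes the true core of the theorem and cannot be reached by formal induction alone, but requires a structural argument on simple $d$-polytopes with $2d$ vertices, together with the excess theorem to eliminate the non-simple alternatives.
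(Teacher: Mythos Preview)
The paper does not prove this theorem at all: it is quoted verbatim from \cite[Theorem 7]{PinUgoYos19} and used as a black box throughout. There is therefore no ``paper's own proof'' to compare against.

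As for your sketch itself, the pyramid case is handled correctly, and the facet-plus-outside counting you describe is exactly the mechanism codified here as \cref{lem:outside}. However, you explicitly concede that the non-pyramidal case is unfinished: you write that the accounting ``should'' push the excess above $(k-1)(d-k)$, and you note that \cref{thm:excess} alone is far too weak for $k$ near $d/2$. That honesty is appropriate, but it means what you have is an outline, not a proof. The actual argument in \cite{PinUgoYos19} does proceed essentially along the lines you suggest---induction on $d$, choosing a facet with many vertices, and careful edge counting for the vertices outside it---but the work lies precisely in the part you leave open: showing that for a non-pyramid the combined bound from the facet (via induction) and the outside vertices (via the \cref{lem:outside} estimate) strictly exceeds $\phi(d+k,d)$ for every admissible $j$. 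You would need to carry out that comparison explicitly and verify it for all $1\le j\le k-1$; the case $j=k-1$ (two vertices outside) is the tightest and is where the uniqueness of the triplex emerges.
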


A {\it missing edge} in a polytope is a pair of distinct vertices with no edge between them. \cref{thm:triplexes} then says that a $d$-polytope with $v\le2d$ vertices has at most $2{v-d\choose2}$ missing edges, and that this maximum is attained by the appropriate triplex.

For simplicial polytopes, the well known Lower Bound Theorem gives a stronger conclusion, without a restriction on the number of vertices.

\begin{theorem}\label{thm:LBT}\cite{Bar73} Let $P$ be a $d$-polytope with $v=d+k$ vertices, and suppose that every facet of  $P$ is a simplex. Then $P$ has  $dv-{d+1\choose 2}$ edges, equivalently $P$ has excess degree at least $(k-1)d$.
\end{theorem}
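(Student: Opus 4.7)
My plan is to prove \cref{thm:LBT} via infinitesimal rigidity of the $1$-skeleton of $P$, in the spirit of Kalai and Whiteley. Realise $P$ concretely in $\mathbb{R}^d$ with vertex positions $p_1,\dots,p_v$. An \emph{infinitesimal motion} assigns a velocity $w_i\in\mathbb{R}^d$ to each vertex satisfying $\langle p_i-p_j,\,w_i-w_j\rangle=0$ for every edge $\{v_i,v_j\}$. These linear equations cut out the kernel of the rigidity matrix and always contain the $\binom{d+1}{2}$-dimensional subspace of trivial motions arising from Euclidean isometries. If I can show that the $1$-skeleton is infinitesimally $d$-rigid, meaning the kernel consists of trivial motions only, then the rank--nullity theorem yields $e(P) \ge dv - \binom{d+1}{2}$, which is exactly the asserted bound; the excess-degree reformulation is then immediate from identity~(1).

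The heart of the argument is thus to prove that the $1$-skeleton of any simplicial $d$-polytope is infinitesimally $d$-rigid. I would induct on $d$, with the base case $d=3$ handled by Cauchy's rigidity theorem for convex polyhedra. For the inductive step, the main tool is the \emph{cone construction}: if a graph $G$ is generically $(d-1)$-rigid in $\mathbb{R}^{d-1}$, then adding a new vertex adjacent to every vertex of $G$ produces a generically $d$-rigid graph in $\mathbb{R}^d$. The simpliciality hypothesis ensures that the vertex figure at each vertex $u$ of $P$ is a simplicial $(d-1)$-polytope, whose boundary complex is combinatorially the link of $u$; the inductive hypothesis applies to this link. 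The cone construction then makes the star of $u$ in the $1$-skeleton of $P$ into a $d$-rigid subframework. A \emph{gluing lemma}---two $d$-rigid frameworks sharing at least $d$ affinely independent vertices combine into a $d$-rigid framework---propagates rigidity across the stars of neighbouring vertices to yield $d$-rigidity of the entire $1$-skeleton.

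The hard part will be proving the rigidity claim itself: both the cone construction and the gluing lemma belong to the less elementary side of rigidity theory, and establishing them cleanly requires either generic coordinates or careful limiting arguments that move beyond pure combinatorics. Verifying that enough affinely independent vertices are available at each gluing step also uses the simpliciality of the facets essentially, since a non-simplex facet could leave the shared subcomplex too low-dimensional. Once infinitesimal $d$-rigidity of the $1$-skeleton is secured, the edge bound is an automatic rank count, and the excess-degree statement follows directly from~(1). I do not see how this approach immediately delivers the equality case (stacked polytopes), but that characterisation is not claimed in the statement here.
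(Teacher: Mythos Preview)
The paper does not supply its own proof of \cref{thm:LBT}; it merely quotes the Lower Bound Theorem with a citation to Barnette~\cite{Bar73} and uses it as a black box (only in Case~8 of the main argument). So there is no ``paper's proof'' to compare against in the strict sense.

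Your outline is correct and is essentially Kalai's 1987 rigidity proof of the Lower Bound Theorem, which is a genuinely different route from Barnette's original. Barnette's argument is purely combinatorial: he works with facet systems and an inductive reduction on the number of vertices (removing a vertex of small degree and controlling how the edge count changes), with no appeal to rigidity. Kalai's approach, which you sketch, trades that combinatorics for the linear algebra of the rigidity matrix together with the cone and gluing lemmas. What Kalai's method buys is a uniform mechanism that extends far beyond polytopes (to simplicial manifolds and pseudomanifolds), at the cost of importing the rigidity machinery you flag as ``the hard part''. Barnette's proof is more self-contained but less portable.

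Two small points worth tightening in your write-up. First, the $d=3$ base case is really Dehn's infinitesimal rigidity theorem for triangulated convex polyhedra, not Cauchy's (global) rigidity theorem; the distinction matters because you need the kernel of the rigidity matrix, not just absence of flexes. Second, for the gluing step you should record explicitly that adjacent stars in a simplicial $d$-polytope share the $d$ vertices of a common simplex facet, which supplies the $d$ affinely independent points the gluing lemma demands; you allude to this but do not state it. With those clarifications your plan goes through.
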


In describing a polytope, it is enough to know all the vertex-facet incidences; this determines the entire face lattice. We need to be familiar with some important examples. Truncating a simple vertex of any polytope clearly yields a new polytope with $d-1$ more vertices than the original, but the same excess degree; thus the number of edges will increase by $d\choose2$.	
A pentasm (needed in case 3 of our main theorem) can be defined \cite[p. 2015]{PinUgoYos18} as a $(2,d-2)$-triplex with a simple vertex truncated. It has $2d+1$ vertices, $d^2+d-1$ edges, and hence excess degree $d-2$. Its facets are $d+3$ in number: $d-2$ pentasms of  dimension $d-1$; two prisms; and three simplices.
Another way to view the pentasm is as the convex hull of two disjoint faces: a pentagon  and a $(d-2)$-dimensional prism.

\begin{theorem}\label{thm:2d+1}\cite[Thm. 13(iii)]{PinUgoYos19} Let $P$ be a $d$-polytope with $2d+1$ vertices, where $d\ge5$. Then $P$ has at least $d^2+d-1$ edges, with equality only if $P$ is a pentasm.
\end{theorem}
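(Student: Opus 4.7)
The plan is to reformulate via equation~(1): the bound $e(P)\ge d^2+d-1$ is equivalent to $\xi(P)\ge d-2$. Because Theorem~\ref{thm:excess} already supplies $\xi(Q)\ge d-2$ for every non-simple $d$-polytope $Q$, the argument splits into (i) showing $P$ cannot be simple, and (ii) characterising the equality case $\xi(P)=d-2$ as the pentasm.

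For (i), a parity argument disposes of odd $d$ at once: $\xi(P)=2e(P)-d(2d+1)$ cannot vanish, since $d(2d+1)$ is odd while $2e(P)$ is even. For even $d\ge 6$ one must exclude simple $d$-polytopes with $2d+1$ vertices directly. The natural route is to pick any facet $F$ of such a hypothetical $P$, note that $F$ is a simple $(d-1)$-polytope from each of whose vertices exactly one ``up-edge'' leaves $F$ (since $P$ is simple), and derive a contradiction by combining the resulting edge-count identity with the known restrictions on the vertex counts of simple $(d-1)$-polytopes with few vertices.

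Given $P$ non-simple, Theorem~\ref{thm:excess} yields $\xi(P)\ge d-2$. To analyse equality, I would select a facet $F$ of maximum vertex count $m=v(F)$ and decompose $e(P)$ into edges inside $F$, edges with exactly one endpoint in $F$, and edges disjoint from $F$, bounding each term. When $m\le 2(d-1)$, Theorem~\ref{thm:triplexes} gives $e(F)\ge \phi(m,d-1)$; the two remaining values $m\in\{2d-1,2d\}$ are treated separately, either by observing that $F$ itself has very small excess or by an inductive argument in dimension. In the equality regime, all slacks in these bounds should collapse, forcing $F$ to be the simplicial $(d-1)$-prism and the $d+1$ vertices outside $F$ to combine with two vertices of $F$ into a pentagonal $2$-face. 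Matching this with the convex-hull-of-a-pentagon-and-a-$(d-2)$-prism description of the pentasm recalled just before the theorem then identifies $P$.

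The main obstacle will be the classification in~(ii): several configurations could a priori realise $\xi(P)=d-2$ (for example a single vertex of excess $d-2$ versus two or more non-simple vertices whose excesses sum to $d-2$), and eliminating the non-pentasm candidates calls for a careful case split organised by the distribution of excess among the non-simple vertices and by the facet types, with Remark~\ref{rmk:Delta-Facets} ruling out the combinations of $\Delta_{m,n}$-type facets that are incompatible with the pentasm's facet list.
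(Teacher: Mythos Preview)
The paper does not actually prove this theorem: it is quoted verbatim from \cite[Thm.~13(iii)]{PinUgoYos19} as background, with no accompanying argument. So there is no ``paper's own proof'' against which to compare your proposal.

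That said, a few remarks on your outline. Part~(i) is correct but unnecessarily laborious: \cref{lem:simple}, already recorded in the paper, lists the only simple $d$-polytopes with fewer than $3d-1$ vertices as the simplex, the prism, $\Delta_{2,d-2}$ and $\Delta_{3,3}$, with vertex counts $d+1$, $2d$, $3d-3$ and $16$. None of these equals $2d+1$ when $d\ge5$, so non-simplicity is immediate and your parity/facet argument can be replaced by a one-line citation.

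Part~(ii) is where the real content lies, and here your proposal is a plausible outline rather than a proof. The decomposition by a maximum-cardinality facet and the use of \cref{thm:triplexes} to bound $e(F)$ is exactly the template the present paper uses for the $2d+2$ case, so the strategy is sound. But the assertion that ``all slacks collapse, forcing $F$ to be the simplicial $(d-1)$-prism'' is not justified: you have not explained why, for instance, $F$ could not be a pentasm (with $2d-1$ vertices) or a triplex other than the prism, nor why the two vertices outside cannot attach in some non-pentasm way. The structural classification of $d$-polytopes with excess exactly $d-2$ (recalled near the end of this paper, from \cite[Theorem~4.10]{PinUgoYos18}) is in fact the efficient tool here: it pins down the non-simple vertex set as a simplex face of dimension $0$ or $d-3$ lying in two simple facets, and from there the vertex count $2d+1$ quickly isolates the pentasm. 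Your sketch gestures at a case split on the distribution of excess, which is compatible with this, but as written it does not close the argument.
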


Additional minimisers of the number of edges (of $d$-polytopes with $2d+1$ vertices) appear when $d=3$ or 4; these will be discussed shortly.

We now recall from \cite[\S2.2]{PinUgoYos19} some examples of polytopes with few vertices and edges, which occur repeatedly in our work.

\begin{figure}[h]
\includegraphics[scale=1]{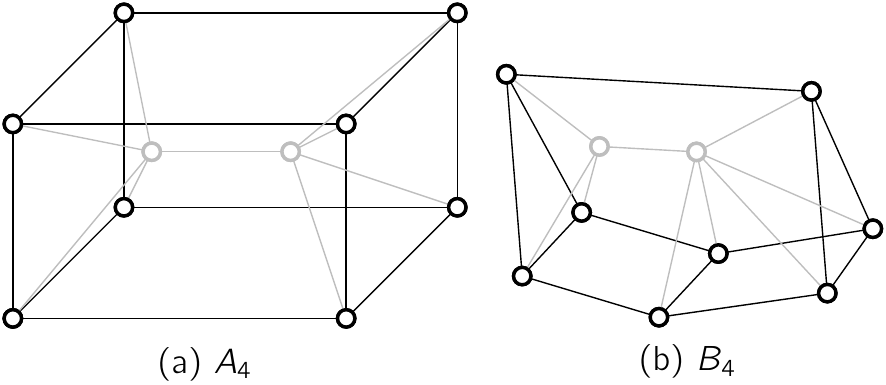}
\caption{Polytopes $A_4$ and $B_4$.}
\label{a4b4}
\end{figure}

Denote by $A_d$ a polytope obtained by truncating a nonsimple vertex of  a $(2,d-2)$-triplex. This polytope can be also realised as a prism over a copy of $M_{2,d-3}$. It has $2d+2$ vertices and excess degree $2d-6$ (\cref{a4b4}).

\begin{rmk}[Facets of $A_d$] \label{rmk:Ad-Facets}
The facets of the $d$-polytope $A_d$, $d+3$ in total, are as follows.
\begin{itemize}
\item $d-3$ copies of $A_{d-1}$,
\item 4 simplicial prisms, and
 \item 2 copies of $M_{2,d-3}$.
\end{itemize}
\end{rmk}

Denote by $B_d$  a polytope obtained by truncating a simple vertex of a $(3,d-3)$-triplex. The polytope $B_3$ is the well known {\it 5-wedge}. The polytope $B_d$ can  be also realised as the convex hull of $B_3$ and a simplicial $(d-3)$-prism $R$ where each vertex of one copy of the $(d-4)$-simplex in $R$ is adjacent to each of the three vertices in a triangle of $B_3$ and  each vertex of the other copy of the $(d-4)$-simplex in $R$ is adjacent to each of the remaining five vertices of $B_3$.  Besides, it has $2d+2$ vertices and excess degree $2d-6$ (\cref{a4b4}).

\begin{rmk}[Facets of $B_d$]\label{rmk:Bd-Facets}
The facets of the $d$-polytope $B_d$, $d+3$ in total, are as follows.
\begin{itemize}
\item $d-3$ copies of $B_{d-1}$,
\item 2 simplices,
\item 1 simplicial prism,
\item 1 copy of $M_{2,d-3}$, and
\item 2 pentasms.
\end{itemize}
\end{rmk}

\begin{rmk}[Similarity of $A_d$, $B_d$]\label{rmk:AdBd}
There is a certain commonality in the structure of $A_{d}$ and $B_{d}$. In both cases, the polytope  can be described as the convex hull of three disjoint faces, two $(d-5)$-dimensional simplices $S_1$ and $S_2$ (whose convex hull constitutes a prism), and a simple 3-face (either a cube or a 5-wedge). The vertices of the 3-face can be partitioned into two subsets  $Q_1$ and $Q_2$, in such a way that a vertex in $S_i$ is adjacent to a vertex in $Q_j$ if and only if $i=j$. In the case of the cube,  $Q_1$ and $Q_2$ correspond to two opposite faces. In the case of the 5-wedge,  $Q_1$ corresponds to a triangular face and $Q_2$ corresponds to the other triangular face, together with the quadrilateral with which it shares an edge.
\end{rmk}

We have presented the structure of $A_d$ and $B_d$ in some detail because, in most dimensions, these two examples are the minimisers of the number of edges, amongst all $d$-polytopes with $2d+2$ vertices. As detailed in \cref{thm:2d+2} below, there are some exceptions in low dimensions, which we now describe.

If two simple vertices of a $d$ polytope are adjacent, truncating that edge will produce a polytope with $2d-4$ more vertices and the same excess degree.
Denote by $C_d$ a polytope obtained by truncating such a {\it simple edge}, of a  $(2,d-2)$-triplex. It has $3d-2$ vertices and excess degree $d-2$. Obviously $C_2$ is just another quadrilateral.

Denote by $\Sigma_d$ a certain polytope which is combinatorially equivalent to the convex hull of
 \[\{0,e_1,e_1+e_k,e_2,e_2+e_k,e_1+e_2,e_1+e_2+2e_k: 3\le k\le d\},\]
where $\{e_i\}$ is the standard basis of $\mathbb{R}^d$. It is easily shown to have $3d-2$ vertices; of these, one has excess degree $d-2$, and the rest are simple. It can be expressed as the Minkowski sum of two
simplices. For consistency, we can also define $\Sigma_2$ as a quadrilateral.

Recall from \cite[p. 2017]{PinUgoYos18} that $J_d$ is the simple polytope obtained by slicing one
vertex of a simplicial $d$-prism; it clearly has $3d-1$ vertices. Of course $J_2$ is just a
pentagon and $B_3$ coincides with $J_3$. The facet sof $J_d$ are $d-1$ copies of $J_{d-1}$,
two  prisms, and two simplices.

Let us also introduce here the polytope $N_d$ obtained by truncating a simple vertex of a triplex $M_{d-1,1}$. It also has $3d-2$ vertices:  one with excess degree $d-2$, with the rest being simple. Its facets are one prism, one copy of $J_d$, one copy of $M_{d-2,1}$, two simplices and $d-2$ copies of $N_{d-1}$. Note that $N_3$ is a pentasm, and $N_4=B_4$. For $d\ge5$, $N_d$ is distinct from all the examples just described. This can also be considered as a basic example, but we will not see it again until the end of the paper.

Now we present some technical results which will be necessary later.

It is well known that any simple $d$-polytope, other than a simplex or prism, has at least $3d-3$ vertices. This follows easily from the $g$-theorem \cite[\S8.6]{Zie95}, but elementary arguments are also available. More precisely, we have the following classification, which is a rewording of \cite[Lemma 10(ii)]{PrzYos16}.

\begin{lemma}\label{lem:simple} Any simple $d$-polytope with strictly less than $3d-1$ vertices is either a simplex, a prism, $\Delta_{2,d-2}$ or $\Delta_{3,3}$. In particular, for every $d\ne 6$, the smallest vertex counts of simple $d$-polytopes are $d+1,2d,3d-3$ and $3d-1$. In dimension 6 only, there is also a simple polytope with $3d-2$ vertices. The only one of these which contains two disjoint simplex facets is the prism.
\end{lemma}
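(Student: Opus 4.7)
The plan is to separate the three assertions: the classification of simple $d$-polytopes with fewer than $3d-1$ vertices, the resulting vertex-count sequence, and the statement about disjoint simplex facets. The first two parts are essentially a restatement of the cited result \cite[Lemma 10(ii)]{PrzYos16}, so the cleanest route is to invoke it directly. For a self-contained argument I would pass to polar duality and instead classify simplicial $d$-polytopes with fewer than $3d-1$ facets; applying the $g$-theorem forces the $h$-vector to satisfy Dehn--Sommerville symmetry together with the inequality chain $1 = h_0 \le h_1 \le \dots \le h_{\lfloor d/2 \rfloor}$, and only a short list of $h$-vectors can meet $f_{d-1} = \sum_i h_i < 3d-1$. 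Each such $h$-vector in turn matches one of the four listed polytopes, and the dimension constraint for $\Delta_{3,3}$ is visible from the numerical equation $(m+1)(n+1) = 3(m+n)-2$, which with $m,n \ge 2$ forces $(m,n) = (3,3)$.

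For the last sentence, I would simply check the four candidates in turn, using the facet structure recorded earlier in the paper. The simplex has all facets simplex but any two share a ridge, so none are disjoint. The prism $\Delta_{d-1,1}$ has exactly two simplex facets, its parallel bases $\Delta_{d-1,0}$, and they are obviously disjoint; this is the case that establishes existence. For $\Delta_{2,d-2}$ with $d \ge 4$, the Remark on facets of $\Delta_{m,n}$ lists the facets as copies of $\Delta_{1,d-2}$ (prisms) and $\Delta_{2,d-3}$, neither of which is a simplex, so no simplex facets occur. Similarly, the facets of $\Delta_{3,3}$ are copies of $\Delta_{2,3}$, again not simplices. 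Thus the prism is the unique candidate containing two disjoint simplex facets.

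The hard part, if one avoids invoking the external result, is ruling out simple $d$-polytopes whose vertex counts fall in the gaps $\{d+2, \dots, 2d-1\}$, $\{2d+1, \dots, 3d-4\}$, or (for $d \ne 6$) at exactly $3d-2$. This requires either the $h$-vector analysis just sketched or an induction on $d$ using facet selection combined with the fact that facets of simple polytopes are simple; neither route is completely routine, which is presumably why the paper chooses to quote the existing classification rather than reprove it.
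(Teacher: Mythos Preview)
Your proposal is correct and matches the paper's treatment: the paper does not give an independent proof but simply presents the lemma as a rewording of \cite[Lemma 10(ii)]{PrzYos16}, with the final sentence about disjoint simplex facets implicitly justified by the earlier Remark~\ref{rmk:Delta-Facets} (which records that $\Delta_{m,n}$ has no simplex facets when $m\ge2$ and $n\ge2$, and that no two facets of a simplex are disjoint)---exactly the facet check you carry out. Your optional self-contained route via polar duality and the $g$-theorem goes beyond what the paper supplies; the paper mentions that the weaker $3d-3$ bound ``follows easily from the $g$-theorem'' but defers the full classification to the cited reference.
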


The next result \cite[Lemma 2.5]{PinUgoYos18} is surprisingly useful.

\begin{lemma}\label{lem:simpleVertexOutside}
Let $P$ be a polytope, $F$ a facet of $P$ and $u$ a nonsimple vertex of $P$ which is contained in $F$. If $u$ is adjacent to a simple vertex $x$ of $P$ in $P\setminus F$, then $u$ must be adjacent to another vertex in $P\setminus F$, other than $x$.
\end{lemma}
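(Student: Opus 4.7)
The plan is to argue by contradiction: assume the only neighbour of $u$ in $P\setminus F$ is $x$, and deduce $\deg_P(u)=d$, contradicting the hypothesis that $u$ is nonsimple. The main tool is the vertex figure $Q=P/u$, a $(d-1)$-polytope whose vertices, edges, and facets correspond, respectively, to the edges of $P$ at $u$, the $2$-faces of $P$ through $u$, and the facets of $P$ containing $u$. Under this correspondence, the facet $F$ gives rise to a facet $F'$ of $Q$ whose vertices are exactly the edges at $u$ lying in $F$; by the contradictory hypothesis these number $\deg_P(u)-1$, and the one remaining vertex $\tilde x\in Q$, corresponding to the edge $ux$, lies outside $F'$.

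The next step is to observe that $Q$, having a facet $F'$ and a single vertex $\tilde x$ on the opposite side of its affine hyperplane, is simply $\mathrm{conv}(F'\cup\{\tilde x\})$, i.e.\ a pyramid over $F'$ with apex $\tilde x$. Since the apex of a pyramid is adjacent to every base vertex, this forces
\[
\deg_Q(\tilde x) \;=\; v(F') \;=\; \deg_P(u)-1.
\]

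On the other hand, edges of $Q$ at $\tilde x$ correspond to $2$-faces of $P$ containing the edge $ux$. Because $x$ is simple in $P$, its vertex figure is a $(d-1)$-simplex, so every edge incident to $x$ lies in exactly $d-1$ such $2$-faces. Equating the two expressions for $\deg_Q(\tilde x)$ yields $\deg_P(u)=d$, the desired contradiction. The only delicate point of the plan is confirming that a polytope whose vertices consist of the vertices of one facet together with a single extra vertex is automatically a pyramid over that facet; this reduces to noting that the extra vertex cannot lie on the affine hyperplane spanned by the facet, and is the main, though rather mild, obstacle in turning this sketch into a full proof.
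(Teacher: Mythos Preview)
Your proof is correct. The vertex-figure argument works exactly as you describe: under the assumption that $x$ is the unique neighbour of $u$ outside $F$, the vertex figure $Q=P/u$ has the single vertex $\tilde x$ outside the facet $F'$, so $Q$ is a pyramid with apex $\tilde x$; counting the $2$-faces through the edge $ux$ from the side of the simple vertex $x$ gives $\deg_Q(\tilde x)=d-1$, while the pyramid structure gives $\deg_Q(\tilde x)=\deg_P(u)-1$, forcing $u$ to be simple. The step you flag as delicate is harmless: a vertex of $Q$ not lying in the facet $F'$ automatically lies off the supporting hyperplane of $F'$, so $Q=\mathrm{conv}(F'\cup\{\tilde x\})$ is genuinely a pyramid.

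As for comparison with the paper: there is nothing to compare against, since the paper does not prove this lemma but merely quotes it as \cite[Lemma~2.5]{PinUgoYos18}. Your argument is a clean, self-contained proof that could stand in for the citation.
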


A very useful tool for us is the following concept: a polytope $P$ is called {\it decomposable} if it can be expressed as the (Minkowski) sum of two polytopes which are not similar to it, i.e. not obtainable from $P$ just by translation and scaling. Inevitably, all other polytopes are described as  {\it indecomposable.} We refer to \cite{PrzYos08} and the references therein for a more detailed discussion of this topic. Kallay \cite{Kal82} showed that decomposability of a polytope can often be decided from properties of its graph. He introduced the concept of a {\it geometric graph}, as any graph $G$ whose vertex set $V$ is a subset of $\r^d$ and in which every edge is a line segment
joining members of $V$; we find it convenient to add the restriction that no three vertices are collinear. Such a graph  need not be the edge graph of any polytope. He then extended the notion of decomposability to geometric
graphs in a consistent manner. We omit his definition; the important point
\cite[Theorem 1]{Kal82} is that a polytope is indecomposable as just defined if and only if its edge graph is indecomposable in his sense.

A strategy for proving indecomposability of a polytope is to prove that certain basic geometric graphs are indecomposable, and then to build up from them to deduce that the entire skeleton of our polytope is indecomposable. As in \cite[p. 171]{PrzYos16}, let us say that a geometric graph $G=(V,E)$ is a simple extension of a  geometric graph  $G_0=(V_0,E_0)$ if $G_0$ is a subgraph of $G$, $V\setminus V_0$ is just one vertex, and  $E\setminus E_0$ comprises two (or more) edges containing that vertex. The following result summarizes everything we need in the sequel; we have not included stronger known statements about decomposability.

\begin{theorem}\label{decomp}

\begin{enumerate}[(i)]
\item If $G$ is a simple extension of $G_0$, and $G_0$ is an indecomposable geometric graph, then $G$ is also indecomposable.
\item A single edge is indecomposable; any triangle is indecomposable.
\item A geometric graph isomorphic to the complete bipartite graph $K_{2,3}$  is decomposable if and only if it lies in a plane.
\item A polytope $P$ is indecomposable, if (and only if) its graph contains an indecomposable subgraph $G$  whose vertex set contains at least one vertex from every facet of $P$.
\item If $P$ is a pyramid, then it is indecomposable.
\item Any $d$-polytope with $2d$ or fewer vertices, other than the prism, is indecomposable.
\end{enumerate}
\end{theorem}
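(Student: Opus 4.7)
The plan is to treat \cref{decomp} as a distillation of Kallay's framework \cite{Kal82}, with (i)--(iii) elementary facts about geometric graphs, (iv) the decisive link to Minkowski decomposability of polytopes, and (v)--(vi) corollaries. I would open by recalling the basic correspondence at the heart of the theory: Minkowski summands of $P$ are in bijection (modulo translation and positive scaling) with non-negative edge-weightings of the edge-graph of $P$ satisfying a closure condition around every $2$-face, and a geometric graph is called indecomposable precisely when every such consistent weighting on it is constant.

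Given this, (ii) is a direct algebraic check: a single edge admits only a one-parameter family of weightings (already trivial), and a triangle's three edge vectors are linearly dependent in a unique way, forcing the three weights to agree. For (iii), a planar $K_{2,3}$ admits an obvious non-trivial weighting (scale the edges incident to one of the two small-side vertices differently), while lifting any vertex off the plane produces a third linearly independent direction that kills this extra degree of freedom. Part (i) follows by restriction: any valid weighting on $G$ restricts to $G_0$, which by hypothesis is constant with value $\lambda$, say; then consistency at the two endpoints in $V_0$ shared by the two (or more) new edges at $w$ forces each new edge to carry weight $\lambda$ as well. Part (v) follows from (i) by induction on $d$: a $d$-pyramid is a simple extension of its $(d-1)$-dimensional base (the apex contributes at least two, in fact all, new edges), the base case being a segment, which is indecomposable by (ii).

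For (iv), a non-trivial decomposition $P=Q+R$ induces a weighting on the subgraph $G$; by indecomposability of $G$, this restriction is constant, so $Q$ agrees on every vertex of $G$ with a scaled translate of $P$. Because $G$ contains a vertex of every facet of $P$, the support function of $Q$ then matches that of the scaled translate on every facet normal, hence everywhere, contradicting the non-triviality of the decomposition. Part (vi) I would invoke from the classification work of Przes{\l}awski--Yost \cite{PrzYos08, PrzYos16}: every $d$-polytope with at most $2d$ vertices other than the simplicial prism is either a pyramid (covered by (v)) or one of a short list of further examples (such as cross-polytopes and triplexes $M_{k,d-k}$ with $k<d$), each of which is handled either by (v) or by exhibiting an explicit indecomposable subgraph meeting every facet and applying (iv).

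The main obstacle I foresee is making the summand-weighting correspondence precise enough that the restriction argument in (i) and the propagation argument in (iv) are clean; in particular, inferring \emph{trivial decomposition of $P$} from \emph{trivial weighting on $G$} in (iv) uses the identification of $Q$'s facet hyperplanes from its values on the vertices of $G$, which is where some convex-geometric care is needed. With this correspondence in place, the remaining steps are essentially bookkeeping, and part (vi) is best handled by a direct citation rather than reproducing the full case analysis.
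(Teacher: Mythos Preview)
The paper's own proof of \cref{decomp} is almost entirely by citation, so you are supplying more detail than the paper does; most of your sketch is sound, but your argument for (v) has a genuine gap.

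You propose to prove (v) by induction on $d$, using (i) with $G_0$ the graph of the base and $G$ the graph of the pyramid. For this to work, the inductive hypothesis must say that the \emph{base} has an indecomposable graph. But the base of a pyramid is an arbitrary $(d-1)$-polytope, not in general a pyramid itself, so the induction does not close. A pyramid over a cube, for instance, has a decomposable base graph, and (i) tells you nothing there. The paper's argument for (v) is both simpler and correct: take any single edge $[a,v]$ from the apex $a$ to a base vertex $v$; the apex lies in every facet except the base, and $v$ lies in the base, so this one edge already touches every facet. It is indecomposable by (ii), and (iv) finishes the job. No induction is needed.

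Two smaller remarks. First, your opening framework phrases indecomposability via non-negative edge-weightings satisfying closure around $2$-faces; that formulation is natural for polytope skeleta but not for general geometric graphs, where there are no $2$-faces. Kallay's actual definition uses \emph{decomposing functions} on the vertex set (maps $\phi:V\to\r^d$ with $\phi(u)-\phi(v)$ parallel to $u-v$ on every edge), and it is in that language that (i) and (iii) are cleanly proved; your argument for (i) goes through once rephrased this way, using that $w$ and its two neighbours in $V_0$ are not collinear. Second, for (ii) the paper simply observes that a triangle is a simple extension of an edge and applies (i); your direct linear-algebra check is also fine.
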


\begin{proof} (indication)
(i) See \cite[Prop. 1]{PrzYos16}

(ii) This is easy for an edge; the case of a triangle then follows from (i).

(iii) This is proved without statement in \cite[Example 12]{PrzYos08}.

(iv) Proofs may be found in \cite{PrzYos08,PrzYos16} and elsewhere.

(v) Consider any edge containing the apex of the pyramid. This is an indecomposable subgraph which touches every facet, and the conclusion follows from (iv).

(vi) See \cite[Theorem 9]{PrzYos16}.
\end{proof}

In the other direction, the following  sufficient condition  will be useful to us several times. It is due to Shephard; for another proof, see \cite[Prop. 5]{PrzYos16}.  We  say \cite[p. 30]{PinUgoYos19} that a facet $F$ of a polytope $P$ has {\it Shephard's property} if for every vertex $u \in F$, there exists exactly one edge in $P$ that is incident to $u$ and does not lie in $F$. We also say that a
polytope is a {\it Shephard polytope} if it has at least one facet with Shephard's property.

\begin{theorem}
[{\cite[Result (15)]{She63}}] \label{shp} If a polytope $P$ has a facet $F$ with   Shephard's property, and there are at least two vertices outside $F$, then $P$ is decomposable. In particular, any simple polytope other than a simplex is decomposable.
\end{theorem}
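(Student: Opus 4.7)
The plan is to construct a non-trivial Minkowski summand of $P$ by using Shephard's property to exhibit a one-parameter deformation of $P$ beyond the trivial family of translates and dilates. Place $F$ in the hyperplane $H=\{x_d=0\}$ with $P\subseteq\{x_d\ge 0\}$, and for small $t\ge 0$ define $P_t$ by pushing the supporting hyperplane of $F$ outward to $\{x_d=-t\}$ while leaving the remaining facet-defining hyperplanes of $P$ untouched.

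Shephard's property guarantees that each vertex $u\in\ver F$ is the intersection of $F$ with precisely $d-1$ other facets of $P$, whose common intersection is the line carrying the unique outgoing edge $e_u$; thus shifting $F$'s hyperplane outward merely slides $u$ along $e_u$, and no face incidences are created or destroyed. Consequently $P_t$ is combinatorially equivalent to $P$ for all small $t\ge 0$, and the family $\{P_t\}$ traces a ray in the type cone of $P$ (polytopes whose normal fan coarsens that of $P$). The hypothesis of at least two vertices outside $F$ is precisely what ensures this ray is transverse to the $(d+1)$-dimensional subcone of translates and scalings: if there were only one vertex outside $F$, then $P$ would be a pyramid over $F$ and the $F$-shift would reduce to a dilation about its apex.

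A standard argument then identifies this transverse direction with a non-trivial Minkowski summand $R$ of $P$, yielding $P=Q+R$ with neither summand similar to $P$. The main obstacle is carrying out this ``type-cone to Minkowski summand'' step rigorously: concretely, one uses the support-function identity $h_{P_t}=h_P+t\,h_R$ to extract $R$, and the delicate part is verifying that the resulting piecewise-linear function $h_R$ on the normal fan of $P$ is the support function of an honest polytope rather than a merely virtual one. (An alternative, more hands-on route — essentially that of \cite{PrzYos16} — is to directly construct $Q$ and $R$ as convex hulls whose vertices encode, respectively, the ``$F$-parallel'' and ``outgoing'' components of the edge vectors $f(u)-u$, and then verify $Q+R=P$ facet by facet.)

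Finally, the ``in particular'' clause follows at once: every facet of a simple polytope satisfies Shephard's property, since each vertex has degree $d$ with exactly $d-1$ incident edges lying inside any containing facet; and a simple polytope with fewer than two vertices outside some facet must be a pyramid over that facet, whose apex — being simple — forces the base to be a $(d-1)$-simplex, so $P$ itself is the $d$-simplex.
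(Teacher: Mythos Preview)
The paper does not give its own proof of this theorem; it is quoted as Shephard's result, with a pointer to an alternative argument in \cite[Prop.~5]{PrzYos16}. So there is nothing in the paper to compare your attempt against, and your sketch has to stand on its own merits.

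Your deformation/type-cone approach is a valid route, and the ``in particular'' clause is handled correctly. The gap you yourself flag---turning the one-parameter family $\{P_t\}$ into an honest Minkowski decomposition---is indeed the crux, and as written the argument stops short of it. It can be closed without virtual polytopes or delicate convexity checks, however: Shephard's property also lets you push $F$ slightly \emph{inward} (each vertex $u\in\ver F$ slides a short way along $e_u$ toward the interior of $P$), so for small $\varepsilon>0$ the polytope $P_{-\varepsilon}$ exists with the same normal fan as $P$. Since $P_{-\varepsilon},P,P_{\varepsilon}$ share one normal fan and the vertex positions depend affinely on $t$, one has literally
\[
P=\tfrac12 P_{-\varepsilon}+\tfrac12 P_{\varepsilon}.
\]
Your own transversality observation then finishes the job: if $P_{\varepsilon}=\lambda P+w$ were a homothet of $P$, every vertex of $P$ outside $F$ (being fixed under $P\mapsto P_\varepsilon$) would have to coincide with the unique fixed point $w/(1-\lambda)$, impossible when there are at least two such vertices. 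Hence neither summand is homothetic to $P$. I would replace the paragraph beginning ``A standard argument\ldots'' with this explicit decomposition; the parenthetical pointer to the hands-on construction in \cite{PrzYos16} is fine to keep as a remark.
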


\section{Polytopes with $2d+2$ vertices}

As in \cite{PinUgoYos19}, we define  the set $E(v,d)=\{e:$ there is a $d$-polytope with $v$ vertices and $e$ edges$\}$. \cref{thm:triplexes} asserts, for each fixed $k\le d$, that $\min E(d+k,d)=\h d(d+k)+\h(k-1)(d-k)$, and that the triplex $M_{k,d-k}$ is the unique minimiser. So $\min E(v,d)$ is known, whenever $v\le2d$.

\cref{thm:2d+1} asserts that $\min E(2d+1,d)=d^2+d-1$ for $d\ge5$, and that the pentasm is the unique minimiser. For low dimensions, some sporadic examples occur. For $d=3$, it is easy to check that there is a second minimiser, namely $\Sigma_3$. For $d=4$, the pentasm is the only polytope with nine vertices and 19 edges, but $\Delta_{2,2}$ has nine vertices and 18 edges.

Similarly, we will show here that $\min E(2d+2,d)=d^2+2d-3$ for all $d\ge3$ except $d=5$. (It is well known that $\min E(12,5)=30<32=5^2+2\times5-3$, and that $\Delta_{2,3}$ is the only 5-polytope with 12 vertices and 30 edges. It also follows from the Excess Theorem (\cref{thm:excess}) that no 5-polytope has 12 vertices and 31 edges.) Furthermore, we show that, for all $d\ge3$ except $d=4$ and $d=7$, the only polytopes with $2d+2$ vertices and $d^2+2d-3$ edges are the polytopes $A_d$  and $B_d$  defined above.

\begin{theorem}\label{thm:2d+2} For $d\ge3$, the only $d$-polytopes with $2d+2$ vertices and precisely $d^2+2d-3$  edges,  equivalently with excess degree $2d-6$, are as follows.
\begin{enumerate}[(i)]
\item For $d=3$, $d=5$, $d=6$ and all $d\ge8$, only the two polytopes $A_d$ and $B_d$.
\item For $d=4$, the four polytopes $A_4$, $B_4$, $C_4$ and $\Sigma_4$.
\item For $d=7$, the three polytopes $A_7$, $B_7$ and the pyramid over $\Delta_{2,4}$.
\end{enumerate}
Moreover, the 5-polytope $\Delta_{2,3}$ is the only polytope of any dimension with  $2d+2$ vertices and strictly fewer than $d^2+2d-3$  edges.
 \end{theorem}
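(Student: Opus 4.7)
The plan is a strong induction on $d$, with base cases $d=3,4,5,6,7$ handled individually (since sporadic minimisers appear) and a uniform argument for $d\ge 8$; the ``moreover'' claim about $\Delta_{2,3}$ drops out along the way. First I would dispose of the ``moreover'' assertion using the Excess Theorem: suppose $v(P)=2d+2$ and $e(P)<d^2+2d-3$, so $\xi(P)<2d-6$ and, since $v$ is even, $\xi(P)$ is even. By \cref{thm:excess}, either $\xi(P)=0$ or $\xi(P)\ge d-2$. In the simple case, \cref{lem:simple} forces $2d+2$ to be one of the admissible vertex counts of a simple $d$-polytope, which occurs only for $d=3$ (giving the cube, with $e=d^2+2d-3$ exactly, not strictly smaller) and $d=5$ (forcing $P=\Delta_{2,3}$, with $e=30<32$). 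The non-simple range $d-2\le\xi\le 2d-8$ must then be ruled out in parallel with the main case analysis, since any such polytope would also contradict the main theorem.

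For the base cases $d=3,\ldots,7$ I would run a finite facet-by-facet enumeration: any facet is a $(d-1)$-polytope with at most $2d+1$ vertices and so is governed by the known classifications from \cref{thm:triplexes}, \cref{thm:2d+1}, and \cref{lem:simple}. For $d=3$ excess $0$ and $8$ vertices leave only the cube $A_3$ and the $5$-wedge $B_3$ by the standard classification of simple $3$-polytopes. The sporadic examples $C_4,\Sigma_4$ and the pyramid over $\Delta_{2,4}$ emerge naturally; for instance the pyramid over $\Delta_{2,4}$ is a $7$-polytope with $16$ vertices and $e=e(\Delta_{2,4})+v(\Delta_{2,4})=45+15=60=7^2+2\cdot7-3$, matching the required edge count exactly.

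For the inductive step ($d\ge 8$), let $P$ be a $d$-polytope with $v=2d+2$ and $\xi(P)\le 2d-6$. By \cref{lem:simple} no simple $d$-polytope has $2d+2$ vertices once $d\ge 6$, so $P$ is non-simple and $\xi(P)\ge d-2$ by \cref{thm:excess}. I would then fix a facet $F$ of $P$ and split into three subcases according to $v(F)$. If $v(F)=2d+1$, then $P$ is a pyramid over $F$, and the announced classification for polytopes with $2d+3$ vertices shows that for $d\ge 8$ no such pyramid has the correct edge count (the pyramid over $\Delta_{2,4}$ being confined to $d=7$). If $v(F)=2d$, then $F$ is classified by the induction hypothesis, and the two vertices outside $F$ must attach in a pattern consistent with $\xi(P)=2d-6$, yielding exactly $A_d$ or $B_d$. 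If $v(F)\le 2d-1$, then at least three vertices lie outside $F$, and a careful vertex-counting argument using equation~(1) and induction on $F$, combined with \cref{lem:simpleVertexOutside}, either produces a contradiction with $\xi(P)\le 2d-6$ or reduces to a previously classified configuration.

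The hard part will be the case $v(F)=2d$, where one must pin down precisely how the two vertices outside $F$ complete $P$. The right organising principle is \cref{rmk:AdBd}, which describes both $A_d$ and $B_d$ uniformly as the convex hull of two disjoint $(d-5)$-simplices (whose convex hull is a prism) together with a simple $3$-face that is either a cube or a $5$-wedge. The argument would show that the local structure around the two outside vertices must assemble into such a $3$-face, using \cref{decomp} and \cref{shp} to eliminate alternative attachment patterns, and \cref{rmk:Ad-Facets} together with \cref{rmk:Bd-Facets} to confirm that the reconstructed polytope is indeed $A_d$ or $B_d$. Ensuring that intermediate-excess polytopes in the range $[d-2,2d-8]$ are simultaneously excluded (closing the ``moreover'' loophole) is the subtlest piece of bookkeeping.
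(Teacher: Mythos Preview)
Your overall architecture---induction on $d$, case-split by the maximum facet size---is the same as the paper's, but the proposal has real gaps in the two places that carry the weight.

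First, the case $v(F)\le 2d-1$ is not one subcase to be dispatched by ``careful vertex counting''; in the paper it splits into six distinct subcases (maximal facet of size $2d-1$, pentasm or not; size $2d-2$, prism or not; size in $[d+3,2d-3]$; size $d+2$; size $d+1$; simplicial), each requiring a separate argument. Several of these need auxiliary lemmas you have not identified: a structural lemma about three simple vertices outside a facet (to force a large ``other facet'' for some ridge), a classification of $(d{-}2)$-polytopes with $2d-4$ vertices and $(d-2)^2+2$ edges (which is what pins down $d\le7$ in the non-pentasm $2d-1$ case), and repeated use of the ridge/other-facet trick together with \cref{lem:outside}. Your sketch gives no indication of this structure, and \cref{lem:simpleVertexOutside} alone is not enough to close any of these subcases.

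Second, two specific points are off. In the pyramid case you invoke an ``announced classification for polytopes with $2d+3$ vertices''---there is none, and none is needed: the base is a $(d-1)$-polytope with $2(d-1)+3$ vertices, but the paper's argument is simply that the apex contributes excess $d+1$, forcing the base to have excess $\le d-7<d-3$, hence to be simple, whence $d=7$ and the base is $\Delta_{2,4}$ by \cref{lem:simple}. In the $v(F)=2d$ case you say the induction hypothesis classifies $F$ and then the two outside vertices ``attach'' to give $A_d$ or $B_d$; but induction also allows $F$ to be $C_4$, $\Sigma_4$, $\Delta_{2,3}$, or the pyramid over $\Delta_{2,4}$, and you must eliminate these. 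The paper does this via an indecomposability lemma (\cref{cor:2out}) showing that if $F$ is any of those, then $F$ fails Shephard's property and too many edges run out of $F$. Only after that does one analyse the ridges of $F\cong A_{d-1}$ or $B_{d-1}$ and their other facets to reconstruct the full face lattice of $P$; knowing the graph of $P$ is not enough on its own.

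Finally, the paper takes only $d=3,4$ as base cases and handles $d\ge5$ uniformly (the sporadic $\Delta_{2,3}$ and the pyramid over $\Delta_{2,4}$ emerge inside the inductive argument, not as separate base cases). Your plan to enumerate $d=5,6,7$ directly is not wrong in principle, but you give no mechanism for doing so; ``finite facet-by-facet enumeration'' of all $5$-, $6$-, and $7$-polytopes with $12$, $14$, $16$ vertices and bounded edges is not a trivial check.
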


The case $d=3$ of \cref{thm:2d+2} is easy to check; one may also consult catalogues \cite{BriDun73,Fed74}. The case $d=4$ was established in \cite[Theorem 6.1]{PinUgoYos18}. Some arguments in the sequel are simplified by considering only the case $d\ge5$.
We establish several special cases first in order to streamline the proof. Some of them are of independent interest.

\begin{lemma}\label{lem:pyramid} Let $P$ be a $d$-polytope with $2d+2$ vertices and no more than $d^2+2d-3$  edges. If $P$ is a pyramid, then $d=7$ and the base of $P$ is  $\Delta_{2,4}$.
\end{lemma}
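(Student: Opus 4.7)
The plan is to reduce the problem to a statement about the $(d-1)$-dimensional base $Q$ of the pyramid, and then chain the Excess Theorem with the classification of simple polytopes with few vertices.

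Since every vertex of $Q$ is adjacent to the apex, $v(Q)=2d+1$ and $e(P)=e(Q)+v(Q)$, so $e(Q)\le d^2-4$. Using identity (1) in dimension $d-1$, a short computation bounds the excess by $\xi(Q)\le d-7$. Now $Q$ has dimension $d-1$, so the Excess Theorem (\cref{thm:excess}) gives $\xi(Q)\ge d-3$ whenever $Q$ is non-simple. Since $d-7<d-3$, this forces $Q$ to be simple.

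Simplicity pins down $e(Q)=(d-1)(2d+1)/2$, and the bound $e(Q)\le d^2-4$ then forces $d\ge 7$. For the identification of $Q$, I observe that $2d+1<3(d-1)-1$ whenever $d\ge 6$, so \cref{lem:simple} restricts $Q$ to the short list: a simplex, a prism, $\Delta_{2,d-3}$, $\Delta_{3,3}$, or (only if $d-1=6$) the sporadic simple $6$-polytope with $16$ vertices. Their vertex counts are $d$, $2d-2$, $3(d-2)$, $16$ and $16$ respectively, and the only one of these that can equal $2d+1$ is $3(d-2)$, giving $d=7$ and $Q=\Delta_{2,4}$ (which indeed has $15=2\cdot 7+1$ vertices).

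I do not anticipate any step being genuinely difficult; the whole argument is just the Excess Theorem combined with the classification \cref{lem:simple}. The only real care needed is the final bookkeeping to confirm that $\Delta_{3,3}$ and the sporadic $6$-polytope do not accidentally match the required vertex count $2d+1$.
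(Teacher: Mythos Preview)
Your argument is correct and follows essentially the same route as the paper: bound the excess of the base by $d-7$, invoke the Excess Theorem to force simplicity (and hence $d\ge 7$), then use \cref{lem:simple} to identify the base as $\Delta_{2,4}$. One harmless redundancy: the ``sporadic simple $6$-polytope with $16$ vertices'' you list separately \emph{is} $\Delta_{3,3}$, so that case need not be checked twice.
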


\begin{proof}
Our hypothesis amounts to saying that $P$ has  excess degree at most $2d-6$. Let $F$ denote the base, which has $2d+1$ vertices. The apex of the pyramid has excess degree $d+1$, and so $F$ has excess degree at most $(2d-6)-(d+1)$. Since $d-7<d-2$, the Excess Theorem informs us that $F$ is simple and $d=7$. By \cref{lem:simple}, the only simple 6-polytope with $15=3\times6-3$ vertices is $\Delta_{2,4}$.
\end{proof}

A fundamental property of polytopes is that for any facet $F$ and any ridge $R$ contained in $F$, there is a unique facet $F'$ containing $R$ and different from $F$. In this situation, we have $R=F\cap F'$, and we will $F'$ the {\bf other facet} for $R$.

\begin{lemma}\label{lem:cutface}\cite[Theorem 15.5]{Bro83} Suppose $P$ is a  $d$-polytope and $F$ is a proper face of $P$. Then the subgraph of the graph of $P$ induced by the vertices outside $F$ is connected.
\end{lemma}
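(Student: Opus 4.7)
My plan is to prove connectedness via a monotone ``simplex-method'' pivoting argument along edges of $P$. The main idea is to choose a generic linear functional $\ell$ that is minimised on $F$ and strictly larger on every vertex of $P \setminus F$, and then build, from any given vertex outside $F$, a monotone $\ell$-increasing edge-path terminating at a common vertex $v^\ast$.

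\textbf{Step 1 (Construction of $\ell$).} Since $F$ is a face, there exists a supporting affine functional $\ell_0$ with $F = \{x \in P : \ell_0(x) = c\}$ and $\ell_0 > c$ on $P \setminus F$. I would perturb this to $\ell = \ell_0 + \varepsilon \ell_1$, with $\ell_1$ chosen generically and $\varepsilon > 0$ sufficiently small that two conditions hold: (a) no two vertices of $P$ have the same $\ell$-value (in particular there is a unique vertex $v^\ast$ of $P$ at which $\ell$ is maximised), and (b) $\ell(v) > \max_{u \in F \cap \ver P} \ell(u)$ for every $v \in \ver P \setminus F$. Both conditions hold for small enough $\varepsilon$ by continuity and finiteness of $\ver P$. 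In particular $v^\ast \notin F$.

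\textbf{Step 2 (Monotone pivoting).} Fix any $u \in \ver P \setminus F$. Starting from $u_0 = u$, at each step I would choose a neighbour $u_{i+1}$ of $u_i$ in the graph of $P$ with $\ell(u_{i+1}) > \ell(u_i)$, continuing as long as this is possible. Since $\ell$ is strictly increasing along the sequence and $\ver P$ is finite, the process terminates at a vertex $u_k$ all of whose neighbours $w$ satisfy $\ell(w) < \ell(u_k)$. The classical simplex-method fact then implies $u_k = v^\ast$: indeed, at such a local maximum $v$ we have $\ell(v - w) > 0$ for every neighbour $w$ of $v$, so $\ell$ attains its maximum on the tangent cone $\mathrm{cone}(P - v)$ (which is spanned by the edge directions at $v$) at the apex $v$, hence $\ell$ is maximised on $P$ at $v$. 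By condition (a), this vertex is $v^\ast$. Moreover, condition (b) together with $\ell(u_0) > \max_F \ell$ and $\ell(u_i)$ strictly increasing forces every $u_i$ to lie outside $F$.

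\textbf{Step 3 (Conclusion).} Given two vertices $u, w \in \ver P \setminus F$, applying Step 2 to each yields monotone edge-paths from $u$ and from $w$ to $v^\ast$, both staying in $\ver P \setminus F$. Concatenating them (reversing the second) produces a walk from $u$ to $w$ in the subgraph induced by $\ver P \setminus F$, which proves connectedness. The main obstacle is the simplex-method lemma that a local $\ell$-maximum on the graph is a global maximum; I would handle it exactly as above using the tangent cone, since the edges at $v$ generate $\mathrm{cone}(P - v)$ because $P$ is a polytope.
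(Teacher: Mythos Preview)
Your proof is correct; the pivoting argument via a generic perturbation of a supporting functional for $F$ is exactly the standard route to this result (and is essentially the argument in Br{\o}ndsted's book, which the paper simply cites without reproducing a proof). One minor remark: in Step~1 you could dispense with the perturbation for condition~(b) by instead observing that since $\ell_0$ is constant on $F$ and strictly larger outside, any $\ell_0$-nondecreasing path starting outside $F$ automatically stays outside $F$; only condition~(a) actually needs the genericity of $\ell$, and even that can be replaced by a tie-breaking rule. But as written the argument is complete.
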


\begin{lemma}\label{lem:3out} Suppose $P$ is a  $d$-polytope,  $F$ is a facet of $P$, and there are precisely three vertices, say $u_1,u_2,u_3$, outside $F$,  all of them simple. Then, at least $d-4$  ridges contained in $F$ have the property that their other facet contains all three of $u_1,u_2,u_3$.
\end{lemma}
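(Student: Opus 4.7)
My strategy is to enumerate the facets of $P$ by how many of $u_1,u_2,u_3$ each contains, and then transfer the count to the ridges of $F$ via the bijection sending a ridge to its other facet.

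I start by applying \cref{lem:cutface} to $F$ as a proper face of $P$: the subgraph of $P$ induced on $\{u_1,u_2,u_3\}$ is connected, so it is either a triangle (all three pairs adjacent) or a path with two edges. Let $c$ be the number of facets of $P$ containing all three $u_i$, and for $k=1,2$ let $n_k$ be the number of facets $\neq F$ containing exactly $k$ of them. Every facet $F'\neq F$ contains at least one $u_i$, because otherwise $V(F')\subseteq V(F)$ would force $F'=F$.

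Next I show $c=d-2$ in both cases. At a simple vertex $u$ the $d$ facets through $u$ are in bijection with the $d$ edges at $u$ (each facet ``omits'' exactly one edge), and such a facet contains a neighbour $w$ of $u$ precisely when $uw$ is not the omitted edge. Applied at the vertex adjacent to both of the other $u_j$'s (namely $u_1$ in the triangle case, $u_2$ in the path case), this gives exactly $d-2$ facets containing both of the other $u_j$, and since every facet containing all three $u_i$ must pass through this vertex, $c=d-2$ in either case.

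To compute $n_1+n_2$, I combine the incidence identity $n_1+2n_2+3c=3d$ with the pair-count identity $f_{12}+f_{13}+f_{23}=3c+n_2$, where $f_{ij}$ is the number of facets containing both $u_i,u_j$. We have $f_{ij}=d-1$ whenever $u_iu_j$ is an edge (each edge lies in $d-1$ facets), and $f_{ij}\le d-2$ otherwise. In the path case the bound $f_{13}\le d-2$ is sharpened to equality by applying \cref{lem:cutface} inside each facet $F'\neq F$ with the proper face $F\cap F'$: the vertices of $F'$ outside $F\cap F'$ are precisely $V(F')\cap\{u_1,u_2,u_3\}$, and they must induce a connected subgraph in the $u_i$-graph; since the subset $\{u_1,u_3\}$ is disconnected in the path, no facet has intersection exactly $\{u_1,u_3\}$ with $\{u_1,u_2,u_3\}$, forcing $f_{13}=c=d-2$. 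A short computation then yields $n_1+n_2=3$ in the triangle case and $n_1+n_2=4$ in the path case.

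Finally, every ridge $R$ of $P$ contained in $F$ has a unique other facet $F'_R$, and the assignment $R\mapsto F'_R$ is injective (a facet $F'\neq F$ meets $F$ in at most one ridge). Classifying the ridges of $F$ by $|F'_R\cap\{u_1,u_2,u_3\}|=k\in\{1,2,3\}$ yields counts $R_k\le n_k$. Since $F$ is a $(d-1)$-polytope it has at least $d$ facets, so $R_1+R_2+R_3\ge d$; subtracting gives $R_3\ge d-(n_1+n_2)\ge d-4$, as required. The main delicate point is the Balinski-in-$F'$ argument that eliminates the disconnected intersection $\{u_1,u_3\}$ in the path case; without it the bound on $n_1+n_2$ would be too weak and the final inequality would fail.
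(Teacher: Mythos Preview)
Your argument is correct and follows essentially the same path as the paper's: the triangle/path dichotomy from \cref{lem:cutface}, the facet--edge bijection at a simple vertex, and the injection from ridges of $F$ to other facets. Your double-counting identities package the case analysis more systematically, and your use of \cref{lem:cutface} inside $F'$ to exclude facets meeting $\{u_1,u_2,u_3\}$ in exactly $\{u_1,u_3\}$ actually justifies a step the paper merely asserts; note, though, that the bare inequality $f_{13}\ge c=d-2$ already gives $n_1+n_2\le4$, so your stated bound $f_{ij}\le d-2$ for non-edges (which you do not prove) is not actually needed.
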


\begin{proof}
By \cref{lem:cutface}, the subgraph containing $u_1,u_2,u_3$ is connected. There are two cases to consider. Either the three vertices are mutually adjacent, and each is adjacent to exactly $d-2$ vertices in $F$. Or (after relabelling) $u_1$ is not adjacent to $u_3$, in which case $u_1$ and $u_3$ are both adjacent to $u_2$ and to $d-1$ vertices in $F$, while $u_2$ is adjacent to $d-2$ vetices in $F$.

In the first case, each of the three vertices will have degree $d-1$ in any facet which contains it; such a facet must therefore contain one of the other two. So no ridge in $F$ has the property that its other facet contains precisely one of $u_1,u_2,u_3$.

Again by the simplicity of $u_1$, there is only one facet $F'$ of $P$ which contains $u_1$ and $u_2$ but not $u_3$. Thus there is at most one ridge in $F$ whose other facet is $F'$. Hence there are at most three ridges in $F$ having the property that their other facet contains precisely two of $u_1,u_2,u_3$.

Since $F$ contains at least $d$ ridges, at least $d-3$ of them must have the alleged property.

In  the second case, the same reasoning shows that every facet containing $u_2$ also contains either $u_1$ or $u_3$; there is precisely one facet containing $u_1$ and $u_2$ but not $u_3$, and precisely one facet containing $u_2$ and $u_3$ but not $u_1$. There will also be precisely one facet containing $u_1$ but not $u_2$ or $u_3$, and precisely one facet containing $u_3$ but not $u_1$ or $u_2$. No facet can  contain $u_1$ and $u_3$ but not $u_2$.  Hence there are at most four ridges in $F$ having the property that their other facet contains either one or two of $u_1,u_2,u_3$.

Since $F$ contains at least $d$ ridges, at least $d-4$ of them must have the alleged property.

\end{proof}

A common situation for us will be the need to estimate the number of edges  involving a particular set of vertices (often, but not always, the complement of a given facet).

\begin{lemma}\label{lem:outside}\cite[Lemma 4]{PinUgoYos19} Let $S$ be a set of  $n$   vertices of  a $d$-polytope $P$, with $n\le d$.
Then the total number of edges containing at least one vertex in $S$ is at least
$nd-{n \choose 2}$. Moreover, this minimum is obtained precisely when every vertex in $S$ is simple, and every two vertices in $S$ are adjacent.
\end{lemma}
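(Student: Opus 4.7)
The idea is a simple double-counting on the sum of degrees over $S$, combined with the trivial bound on the number of edges internal to $S$. Write $E_S$ for the set of edges of $P$ with both endpoints in $S$, and $E_\partial$ for the set of edges with exactly one endpoint in $S$. The quantity we want to bound below is $|E_S|+|E_\partial|$.

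Since every vertex of a $d$-polytope has degree at least $d$, summing over $S$ gives
\[
 2|E_S| + |E_\partial| \;=\; \sum_{u\in S} \deg(u) \;\ge\; nd.
\]
Subtracting $|E_S|$ from both sides yields
\[
 |E_S|+|E_\partial| \;\ge\; nd - |E_S|.
\]
Now because $|S|=n$, the subgraph induced on $S$ has at most $\binom{n}{2}$ edges, so $|E_S|\le\binom{n}{2}$, and combining with the previous inequality gives the claimed bound $|E_S|+|E_\partial|\ge nd-\binom{n}{2}$. (The hypothesis $n\le d$ is not needed for this inequality per se, but it is what makes the bound nontrivial and attainable, since otherwise the bound would exceed $\binom{n}{2}$ and the strategy fails.)

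\textbf{Equality case.} Tracing the two inequalities, equality holds throughout if and only if (a) $\sum_{u\in S}\deg(u)=nd$, i.e.\ each $u\in S$ has degree exactly $d$ and so is simple in $P$; and (b) $|E_S|=\binom{n}{2}$, i.e.\ every pair of vertices in $S$ is joined by an edge of $P$. These are exactly the two conditions asserted.

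\textbf{Main obstacle.} There is essentially none: the argument is a one-line degree sum, and the only subtle point is to observe that edges inside $S$ are counted twice in $\sum_{u\in S}\deg(u)$ while boundary edges are counted once, so that subtracting $|E_S|$ (not $2|E_S|$) gives the right quantity $|E_S|+|E_\partial|$. The role of $n\le d$ is merely to keep the lower bound $nd-\binom{n}{2}$ non-vacuous and realisable (a simplex facet on $n\le d$ simple, mutually adjacent vertices shows the bound is tight).
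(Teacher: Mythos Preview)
Your proof is correct and proceeds by the same degree-sum double count as the paper. In fact your write-up is more complete: the paper's sketch only records the consequence that each vertex of $S$ has at least $d-n+1$ neighbours outside $S$ (hence $|E_\partial|\ge n(d-n+1)$), leaving the passage to $|E_S|+|E_\partial|\ge nd-\binom{n}{2}$ and the equality analysis implicit, whereas you spell out the identity $2|E_S|+|E_\partial|=\sum_{u\in S}\deg(u)$ and the two equality conditions explicitly.
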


\begin{proof}
Every vertex in $S$ has degree at least $d$, and so is adjacent to at least $d-n+1$ vertices not in $S$. Thus the number of edges between a vertex in $S$ and a vertex $outside$ is at least $n(d-n+1)$.
\end{proof}

We need some information about the structure of $d$-polytopes with $2d$ vertices and whose number of edges is close to minimal. It is known that such a polytope has $d^2$ edges only if it is a prism, and $d^2+1$ edges only if $d=3$ \cite[Theorem 13]{PrzYos16}.

\begin{lemma}\label{lem:minplus2edges} Let $P$ be a $d$-polytope with $2d$ vertices and  $d^2+2$  edges. Then $P$ is one of only seven examples, all with dimension at most five. More precisely
\begin{enumerate}[(i)]
\item If $d=5$,  $P$ is a pyramid over $\Delta_{2,2}$.
\item If $d=4$,  $P$ has at least two nonsimple vertices, and is a pyramid over either a pentasm or $\Sigma_3$, or one of the two polytopes detailed in the table below.
\item For $d=3$, $P$ is the dual of either a pentasm or $\Sigma_3$.
\end{enumerate}
\end{lemma}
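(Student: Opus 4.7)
The plan is to use the excess formula (1) to compute $\xi(P) = 2(d^2+2) - 2d^2 = 4$, then use \cref{lem:simple} to rule out the simple case (the only simple $d$-polytope with exactly $2d$ vertices is the prism, the other simple candidates having $d+1$, $3d-3$ or $16$ vertices), and hence invoke \cref{thm:excess} to conclude $4 \geq d-2$, i.e.\ $d \leq 6$. The four remaining dimensions $d = 3, 4, 5, 6$ are handled individually.

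For $d = 6$, the excess $4 = d-2$ meets the Excess Theorem with equality. Here I would invoke the structural classification from \cite{PinUgoYos18} of non-simple $d$-polytopes attaining $\xi = d - 2$: such polytopes have either $v = d+2$ (for instance $M_{2,d-2}$) or $v \geq 2d+1$ (for instance the pentasm), which excludes the intermediate value $v = 2d = 12$. For $d = 5$, the extremal excess multiset $(4)$ gives a unique non-simple vertex $w$ of degree $v - 1 = 9$; since every non-$w$ vertex is a $w$-neighbour, a short argument shows every 2-face at $w$ must be a triangle, whence the vertex figure of $w$ has $18$ edges — so it is the simple 4-polytope $\Delta_{2,2}$ by \cref{lem:simple}. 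Since $\Delta_{2,2}$ has 6 facets, each a triangular prism, the 6 facets of $P$ containing $w$ are pyramids over triangular prisms with 7 vertices each, contributing $6 \cdot 7 = 42$ vertex-facet incidences; subtracted from the total $6 + 9 \cdot 5 = 51$, this leaves only 9 incidences for facets not containing $w$, so since each such facet has at least $d = 5$ vertices, exactly one such facet exists and contains all nine non-$w$ vertices. Hence $P$ is the pyramid over $\Delta_{2,2}$. The remaining multisets $(3,1),(2,2),(2,1,1),(1,1,1,1)$ in dimension $5$ are ruled out by similar vertex-figure estimates and edge counts via \cref{lem:outside}.

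For $d = 4$, $v - d - 1 = 3$, so every vertex has excess at most 3 and $P$ must have at least two non-simple vertices (proving the first clause of (ii)). The sub-case of multiset $(3, 1)$ again yields a pyramid by the same incidence argument; its base is a 3-polytope with 7 vertices and $18 - 7 = 11$ edges, and hence, by the $d = 3$ analysis below, a pentasm or $\Sigma_3$. The remaining multisets $(2,2),(2,1,1),(1,1,1,1)$ are handled by explicit combinatorial enumeration, as in \cite[Theorem 6.1]{PinUgoYos18}, producing the two sporadic polytopes detailed in the stated table. Finally, for $d = 3$, Euler's formula gives $f = 2 - 6 + 11 = 7$, so the dual of $P$ is a 3-polytope with 7 vertices, 11 edges, and 6 facets; by \cref{thm:2d+1} together with the remark preceding \cref{thm:2d+2} identifying $\Sigma_3$ as a second minimiser in dimension 3, this dual is either a pentasm or $\Sigma_3$.

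The principal obstacles are the $d = 4$ casework (requiring explicit combinatorial identification of two sporadic examples), the precise incidence computation in dimension 5 (which requires ruling out alternative quadrilateral 2-faces at $w$), and the $d = 6$ elimination, whose clean execution depends on the tight classification of the Excess Theorem's equality case — without such a classification one is forced into a delicate direct analysis of the possible excess multisets, each requiring its own vertex-figure and edge-count estimate.
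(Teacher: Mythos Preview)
Your overall strategy is sound, but it diverges from the paper's proof in two places where the paper's route is both shorter and avoids gaps that your version leaves open.

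First, the bound on $d$. You obtain $d\le 6$ from the Excess Theorem and then treat $d=6$ separately. The paper instead invokes \cite[Theorem~19]{PinUgoYos19}, which says that a $d$-polytope with $2d$ vertices which is not a prism has at least $d^2+d-3$ edges; for $d\ge6$ this already exceeds $d^2+2$, so $d<6$ falls out in one line. Your $d=6$ elimination relies on a vertex-count restriction for polytopes with $\xi=d-2$ (namely $v\in\{d+2,2d-1,2d+1\}$ or $v\ge3d-2$), but this is not contained in \cite{PinUgoYos18}: Theorem~4.10 there gives only the structural two-facet description, and the vertex-count consequence is precisely the unproved theorem announced at the end of the present paper. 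So as written, your $d=6$ step is circular or at best forward-referencing.

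Second, the $d=5$ case. The paper's argument is short: excess $4=d-1$, so \cite[Theorem~4.18]{PinUgoYos18} forces $P$ to be a Shephard polytope, hence decomposable or a pyramid; but \cref{decomp}(vi) says a $d$-polytope with $2d$ vertices other than the prism is indecomposable, so $P$ is a pyramid, and the base is forced to be $\Delta_{2,2}$. Your approach via the excess multiset $(4)$ and a vertex-figure/incidence count is plausible for that single multiset, but you then wave away $(3,1),(2,2),(2,1,1),(1,1,1,1)$ with ``similar vertex-figure estimates and edge counts via \cref{lem:outside}.'' This is a genuine gap: \cref{lem:outside} gives nothing useful here (all vertices are already in $P$), and ruling out, say, two vertices of excess~2 requires real work that you have not sketched. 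The decomposability argument bypasses all of this.

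Your treatment of $d=4$ (excess $\le3$ per vertex forces $\ge2$ nonsimple vertices; pyramid subcase; catalogue for the rest) and $d=3$ (Euler, then dualise) match the paper.
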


\begin{proof}
A special case of \cite[Thm. 19]{PinUgoYos19} asserts that  a $d$-polytope with $2d$ vertices which is not a prism must have at least $d^2+d-3$ edges, and this is $\ge d^2+3$ if $d\ge6$. Thus $d<6$.

(i) If $d=5$, then $P$ has 27 edges and excess degree $4=d-1$, so \cite[Theorem 4.18]{PinUgoYos18} informs us that $P$ is a Shephard polytope, in particular either decomposable or a pyramid. But $P$ is not a prism, so must be indecomposable by \cref{decomp}(vi). Thus $P$ is a pyramid, and its base must have nine vertices and 18 edges, making  $\Delta_{2,2}$  the only option.

(ii) In the case $d=4$, $P$ has eight vertices, 18 edges and excess degree four. Any vertex of $P$ has degree at most seven and hence excess degree at most three. So there must be at least two nonsimple vertices. It is not hard to establish directly that there are only  four examples, but we   simply note that this can be verified from catalogues such as \cite{FukMiyMor13}. Let us now describe these four examples. Two obvious examples are  the pyramid over a pentasm and the pyramid over  $\Sigma_3$, which both have just two nonsimple vertices.

Another ``well known" example is  given in \cite[Figure 1d]{PinUgoYosxix}, which has three nonsimple vertices; its facets are one prism, two tetragonal antiwedges, one pyramid and three simplices; and the vertex-facet relations are detailed in the first column in \cref{tab:4Polytopes8Vertices18Edges}. For a concrete representation, take the convex hull of
$(\varepsilon, 0, 0, 0)$,
$(1, 0, 0, 0)$,
$(0, 1, 0, 0)$,
$(0, 0, 1, 0)$,
$(0, 0, 0, 1)$,
$(1, 0, 0, 1)$,
$(0, 1, 0, 1)$, and
$(0, 0, 1, 1)$,
where $\varepsilon >0$ need not be too small.

The fourth example has a straightforward concrete representation, with vertices $(\pm1, \pm1, 0, 0)$, $(\pm1, 0, 1, 0)$, and $(0, \pm1, 0, 1)$. It is not hard to verify that its facets are two prisms, four quadrilateral pyramids and a simplex. There are four nonsimple vertices, and the vertex-facet relations are detailed in the second column in \cref{tab:4Polytopes8Vertices18Edges}.

\begin{table}
\begin{tabular}{c c c }
{Facet}&{Polytope 1}&{Polytope 2}\\
\hline
1:&\{1 2 3 4 5 6\}&\{1 2 3 4 5 6\}\\
2:&\{1 2 3 4 7 8\}&\{1 2 3 4 7 8\}\\
3:&\{1 2 5 6 7 8\}&\{1 2 5 6 7\}\\
4:&\{3 4 5 6 7\}&\{1 3 5 7 8\}\\
5:&\{2 4 6 8\}&\{ 3 4 5 6 8\}\\
6:&\{1 3 5 7\}&\{2 4 6 7 8\}\\
7:&\{4 6 7 8\}&\{5 6 7 8\}\\
\hline
\end{tabular}
\caption{Vertex-facet incidences of nonpyramidal 4-polytopes with eight vertices and  eighteen edges. }
\label{tab:4Polytopes8Vertices18Edges}
\end{table}

(iii) Suppose $d=3$. Then $P$ has six vertices and 11 edges, so  Euler's formula ensures that its dual $P^*$ must have seven vertices and 11 edges. Thus $P^*$ is either a pentasm or $\Sigma_3$.
\end{proof}

\begin{lemma}\label{lem:nopentasm} Let $P$ be a $d$-polytope with $2d+2$ vertices and no more than $d^2+2d-3$  edges. Suppose no facet of $P$ has $2d$ vertices and no facet of $P$ is a $(d-1)$-pentasm, but that some facet has $2d-1$ vertices. Then either $d=7$ and  $P$ is a pyramid over $\Delta_{2,4}$, or $d=4$ and $P$ is $C_4$ or $\Sigma_4$.
\end{lemma}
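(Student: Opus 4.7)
The plan is to separate the pyramid case from the non-pyramid case and then, in the latter, apply a sequence of structural reductions forced by the edge count.

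First, if $P$ is a pyramid, \cref{lem:pyramid} immediately gives $d=7$ and $P=$~pyramid over $\Delta_{2,4}$. One verifies using \cref{rmk:Delta-Facets} that this polytope indeed satisfies the hypotheses of the lemma: its facets have $11$, $13$, or $15$ vertices, none has $2d=14$ vertices, none is a pentasm, and a $13=2d-1$-vertex facet exists.

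Suppose then $P$ is not a pyramid. Any facet with $\ge 2d+1$ vertices would leave at most one vertex outside, forcing $P$ to be a pyramid; combined with the hypothesis that no facet has exactly $2d$ vertices, every facet of $P$ has at most $2d-1$ vertices. Fix a facet $F$ with exactly $2d-1$ vertices and let $u_1,u_2,u_3$ be the three vertices outside. By \cref{lem:outside}, at least $3d-3$ edges have an endpoint in $\{u_1,u_2,u_3\}$, so $e(F)\le d^2-d$. For $d\ge 6$, \cref{thm:2d+1} applied to the $(d-1)$-polytope $F$ with $2(d-1)+1$ vertices yields $e(F)\ge d^2-d-1$, with equality only when $F$ is a $(d-1)$-pentasm, excluded by hypothesis. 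Hence $e(F)=d^2-d$ exactly, $e(P)=d^2+2d-3$, and, by the equality clause of \cref{lem:outside}, the three $u_i$ are simple and pairwise adjacent; in particular $\xi(F)=d-1$.

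Next I would rule out $F$ itself being a pyramid. If $F$ were a pyramid, its base $B$ would be a $(d-2)$-polytope with $2(d-2)+2$ vertices and $(d-1)(d-2)$ edges, strictly fewer than the $(d-2)^2+2(d-2)-3$ edges required by the inductive form of \cref{thm:2d+2} whenever $d\ge 6,\, d\ne 7$. For $d=7$ I would argue instead via ridges: $B$ would be a ridge of $P$ whose other facet $F'$ satisfies $F'\cap F=B$, so $|F'|=|B|+k\le 2d-1$ forces $k=1$; then $F'$ is a pyramid over $B$ with apex some $u_i$, making $u_i$ adjacent to all $12>d-2$ vertices of $B$ and violating its simplicity. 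With $F$ thus non-pyramidal, apply \cref{lem:3out} (mutually-adjacent case) to obtain $d-3$ ridges $R_j$ of $F$ whose other facets $T_j$ each contain $u_1,u_2,u_3$; since distinct facets meet in a ridge, $T_j\cap F=R_j$ and so $|T_j|=|R_j|+3\in[d+2,2d-1]$, with each $u_i$ having $d-3$ neighbours in $R_j$ inside $T_j$. The pentasm-exclusion hypothesis applies to every $T_j$, yielding $e(T_j)\ge d^2-d$ and hence $e(R_j)\ge (d-2)^2+2$. The hard step is to push this analysis---combining the allowed values of $e(R_j)$ via \cref{lem:minplus2edges} (which for $d-2\ge 6$ forbids $(d-2)$-polytopes with $2(d-2)$ vertices and $(d-2)^2+2$ edges altogether), the excess distribution over $F$ bounded by $d-1$ (with non-simple vertices of $F$ adjacent to at least two $u_i$ whenever adjacent to one, by \cref{lem:simpleVertexOutside}), and the constraint that none of the triple-facets is a pentasm---to derive a contradiction for each $d\ge 6,\, d\ne 7$.

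For the low-dimensional cases the analysis is direct. When $d=4$ the facets are $3$-polytopes with seven vertices; a direct enumeration, or the classical catalogue \cite{BriDun73}, leaves only $C_4$ and $\Sigma_4$ meeting all hypotheses. When $d=5$ the $4$-polytope facet $F$ has nine vertices with $e(F)\in\{18,20\}$, the pentasm value $19$ being excluded, and a case-by-case inspection using \cref{lem:minplus2edges} and the known enumeration of $4$-polytopes with nine vertices rules out any further example (the polytope $\Delta_{2,3}$ is addressed separately since it realises strictly fewer than $d^2+2d-3$ edges). The principal obstacle is the combinatorial analysis of the triple-facets $T_j$ in the third paragraph, where the pentasm-free hypothesis must do most of the work against the otherwise very rigid edge-and-excess bookkeeping forced on $F$.
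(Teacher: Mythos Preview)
Your proposal has the right opening moves but misses the paper's key structural observation, and the ``hard step'' you flag is in fact where the argument breaks down.

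The crucial gap is in your third paragraph. Having fixed a ridge $R$ of $F$ whose other facet $T$ contains all three $u_i$, you assert that the pentasm exclusion gives $e(T)\ge d^2-d$ and hence $e(R)\ge(d-2)^2+2$. But this bound on $e(T)$ requires $|T|=2d-1$, i.e.\ $|R|=2d-4$; you have only shown $|R|\le 2d-4$ (from $|T|\le 2d-1$), not the reverse inequality. The paper closes this gap with a short but decisive observation you do not make: every vertex $w\in F\setminus R$ lies in $F$, hence has at least one edge leaving $F$, and the only vertices outside $F$ are $u_1,u_2,u_3$; so $w$ is adjacent to some $u_i$. On the other hand each simple $u_i$ has exactly one neighbour outside $T$ (namely in $F\setminus R$), so $|F\setminus R|\le 3$. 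This forces $|R|\ge 2d-4$, hence $|R|=2d-4$ and $|T|=2d-1$. Only then does the pentasm exclusion bite, and from there one gets $e(R)=(d-2)^2+2$ exactly, so \cref{lem:minplus2edges} applies directly and finishes the analysis (including bounding the number of non-simple vertices of $R$ by $d-5$, which pins down $d\in\{6,7\}$, then eliminating $d=6$ and handling $d=7$).

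Two further remarks. First, your detour to rule out $F$ being a pyramid is unnecessary: the paper's argument is indifferent to this. Second, invoking ``the inductive form of \cref{thm:2d+2}'' inside the proof of a lemma that is itself a step in the proof of \cref{thm:2d+2} is dangerous; you would need to set up the global induction so that the $(d-2)$-dimensional statement is already available when proving this lemma in dimension $d$, and the paper does not structure things that way. Your low-dimensional sketches ($d=4,5$) are also too hand-wavy to count as proofs; the paper dispatches $d=4$ by citation and handles $d=5$ within the same unified argument for $d\ge 5$.
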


\begin{proof}
In three dimensions, a facet with five vertices is a pentasm, so no polytope satisfies the hypotheses. If $d=4$, we simply recall  \cite[Theorem 6.1]{PinUgoYos19}, which asserts that the only 4-polytopes with 10 vertices and no more than 21 edges are $A_4, B_4, C_4$, and $\Sigma_4$. Note that both $A_4$ and $B_4$ contain facets with eight vertices, and do not satisfy the hypotheses.

So assume $d\ge5$. We will show that $P$ is a pyramid. If $F$ is a facet having $2d-1=2(d-1)+1$ vertices, \cref{thm:2d+1} ensures that $F$ has at least $d^2-d$ edges, and consequently is not simple. Moreover, the three vertices $u_1,u_2,u_3$ outside $F$ must be mutually adjacent and all simple, and $F$ must have exactly $d^2-d$ edges; otherwise by \cref{lem:outside}, $P$ would have strictly more than $d^2+2d-3$  edges.

\cref{lem:3out} ensures that some ridge $R$ contained in $F$ is such that its other facet $F'$ contains all three of $u_1,u_2,u_3$. Each $u_i$ is simple in $F'$, and so has $d-3$ edges running into $R$, two edges running into the other $u_j$ and exactly one edge running into  $P\setminus F'$. But each vertex in $F\setminus R$ is adjacent to at least one $u_i$; this implies that $P\setminus F'$ contains at most three vertices. If $P$ is a pyramid over $F'$, \cref{lem:pyramid} completes the proof. Otherwise, $F'$ has $2d-1$ vertices and $P\setminus F'$ contains exactly three vertices. Then there are exactly three vertices in $F\setminus R$, say $w_1,w_2,w_3$, each of them simple, and three edges joining them to $u_1,u_2,u_3$. Then $R$ has $2d-4$ vertices, and there are $3(d-3)$ edges between $u_1,u_2,u_3$ and $R$, likewise at least  $3d-9$ edges between $w_1,w_2,w_3$ and $R$, and nine edges between $u_1,u_2,u_3,w_1,w_2,w_3$. Thus the number of edges in $R$ is exactly
$(d-2)^2+2=\phi(2d-4,d-2)+2$. According to \cref{lem:minplus2edges}, this implies that $d-2<6$.

Since every vertex in $F\setminus R$ is simple, \cref{lem:simpleVertexOutside} ensures that every vertex in $R$ which is not simple in $F$ has at least two neighbours in $F\setminus R$. This implies that the number of nonsimple vertices in $R$ is at most $(3d-9)-(2d-4)=d-5$. Since $F$ is not simple, $R$ must contain a nonsimple vertex, whence $d\ge6$. But if $d=6$, then $R$ is 4-dimensional with eight vertices, 18 edges, and a unique nonsimple vertex, which is impossible by \cref{lem:minplus2edges}(ii).

The only remaining possibility is that $d=7$. By \cref{lem:minplus2edges}(i), the only 5-polytope with 10 vertices and 27 edges is the pyramid over $\Delta_{2,2}$; this must be $R$. The apex of $R$ will then be the only nonsimple vertex in $P$. With excess degree eight, it must be adjacent to every other vertex in $P$. A special case of \cite[Corollary 2.2]{PinUgoYos19} asserts that a  polytope with a unique nonsimple vertex, which is adjacent to every other vertex, must be a pyramid. Again, the base can  be only $\Delta_{2,4}$.
\end{proof}

\begin{lemma}\label{lem:twofaces} Let $P$ be a  $d$-polytope,  with two disjoint faces $F_1$ and $F_2$ whose union contains  ${\rm Vert(P)}$. Suppose that $F_1$ is a facet, and that $F_2=[w_0,w_1]$ is an edge. Then $P$ is decomposable if, and only if, $F_1$ has Shephard's property in $P$. In this case, denoting $V_i=\{u\in {\rm Vert(P)}\cap F: u$ is adjacent to $w_i\}$,  every vertex in $V_0$ is adjacent to at most one vertex in $V_{1}$ and vice versa. Moreover, $F_1$ is also decomposable.
\end{lemma}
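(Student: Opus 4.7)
The forward (``if'') direction follows immediately from \cref{shp}: $F_1$ has Shephard's property and there are exactly two vertices outside it, so $P$ is decomposable.

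For the converse, suppose $P = Q + R$ is a nontrivial Minkowski decomposition. I would use the standard correspondence between faces of $P$ and pairs of faces of the summands: each vertex $v$ of $P$ decomposes uniquely as $v = q(v) + r(v)$ with $q(v) \in \text{Vert}(Q)$ and $r(v) \in \text{Vert}(R)$; each face $F$ of $P$ arises as $F^Q + F^R$ where $F^Q, F^R$ maximize a common functional on $Q, R$; and each edge of $P$ is of exactly one of three types --- a $Q$-edge (with fixed $r$-coordinate), an $R$-edge (with fixed $q$-coordinate), or a parallel-edge pair. Since $F_2$ is an edge, after swapping $Q, R$ if needed I may assume $F_2^Q = \{q^*\}$ and $F_2^R = [r_0, r_1]$ with $w_i = q^* + r_i$. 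The spanning hypothesis $\text{Vert}(P) \subseteq F_1 \cup F_2$ then yields $\text{Vert}(Q) \subseteq F_1^Q \cup \{q^*\}$ and $\text{Vert}(R) \subseteq F_1^R \cup \{r_0, r_1\}$, while disjointness $F_1 \cap F_2 = \emptyset$ forces either $q^* \notin F_1^Q$ or $\{r_0, r_1\} \cap F_1^R = \emptyset$.

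A short check rules out the subcase $q^* \in F_1^Q$: it forces $Q = F_1^Q$ together with $\{r_0, r_1\} \cap F_1^R = \emptyset$; then, since $Q$ must contain some vertex $q \neq q^*$ (else the decomposition is trivial), the point $q + r_0$ would be a vertex of $P$ lying in neither $F_1$ nor $F_2$, contradicting the spanning hypothesis. So $q^* \notin F_1^Q$, which forces $F_1^Q$ to be a facet of $Q$, $F_1^R = R$, and $\text{Vert}(Q) = F_1^Q \cup \{q^*\}$; in particular $Q$ is a pyramid over $F_1^Q$ with apex $q^*$. Applying the spanning argument once more to the vertices $q^* + r(u)$ for $u \in F_1$ shows $r(u) \in \{r_0, r_1\}$ for every such $u$, and consequently $R = [r_0, r_1]$ is a segment.

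With this structure all the claims follow directly. Since $Q$ and $R$ lie in complementary subspaces, no edge of $Q$ is parallel to $R$, which rules out type-(C) edges between $F_1$ and $\{w_0, w_1\}$. For $u \in F_1$ with $r(u) = r_i$, the pyramid edge $[q(u), q^*]$ of $Q$ produces the type-(B) edge $[u, w_i]$ of $P$, while $r(u) \neq r_{1-i}$ and the absence of type-(C) edges prevent any edge $[u, w_{1-i}]$. Thus $u$ is adjacent to exactly one of $w_0, w_1$, establishing Shephard's property, with $V_i = \{u \in F_1 : r(u) = r_i\}$. An edge $[u, u']$ between $u \in V_0$ and $u' \in V_1$ must likewise be type-(A) (again by the complementary-subspaces argument), forcing $q(u) = q(u')$, so the only possible partner of $u$ in $V_1$ is $q(u) + r_1$. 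Finally, $F_1 = F_1^Q + [r_0, r_1]$ is a nontrivial Minkowski sum (since $\dim F_1^Q = d - 2 \ge 1$ for $d \ge 3$), so $F_1$ is decomposable. The principal obstacle is the careful bookkeeping around the spanning hypothesis and the Minkowski correspondence, especially in ruling out the subcase $q^* \in F_1^Q$ and in proving that $R$ must be the segment $[r_0, r_1]$.
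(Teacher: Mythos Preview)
Your strategy—directly analyzing a Minkowski decomposition $P=Q+R$—is genuinely different from the paper's, and workable in principle, but several steps in your sketch are not justified. First, when you decompose the edge $F_2=F_2^Q+F_2^R$, you assume one summand is a point; you omit the case where both $F_2^Q$ and $F_2^R$ are parallel segments. Second, assertions of the form ``$q+r_0$ would be a vertex of $P$'' and ``$q^*+r(u)$ is a vertex of $P$'' are not automatic: the sum of a vertex of $Q$ and a vertex of $R$ is a vertex of $Q+R$ only when their normal cones meet in full dimension, and you never check this. Third, the claim that $F_1^R=R$ (hence $R=[r_0,r_1]$) is asserted but not argued. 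Finally, and most damagingly, the statement ``$Q$ and $R$ lie in complementary subspaces'' is neither proved nor true in general; if $\dim Q=d$ (which you have not excluded), the segment $R$ can be parallel to edges of $F_1^Q$, and then your arguments ruling out type-(C) edges between $F_1$ and $\{w_0,w_1\}$, and between $V_0$ and $V_1$, break down. These gaps can all be patched, but doing so requires substantially more work than your outline suggests.

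The paper avoids all of this Minkowski bookkeeping by using Kallay's graph criterion (\cref{decomp}(iv)): an indecomposable geometric subgraph meeting every facet forces $P$ to be indecomposable. The first observation is that $F_1$ is the \emph{only} facet disjoint from the edge $F_2$, so any subgraph containing both $w_0$ and $w_1$ already meets every facet. If $F_1$ fails Shephard's property, some $u\in F_1$ is adjacent to both $w_i$, and the triangle $\{u,w_0,w_1\}$ does the job. If some $a\in V_0$ had two neighbours $b,c\in V_1$, the five points $a,b,c,w_0,w_1$ carry a non-planar $K_{2,3}$, which is indecomposable by \cref{decomp}(iii) and again meets every facet. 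And if $F_1$ itself were indecomposable, its graph would be an indecomposable subgraph meeting every facet. Each conclusion then follows in one line from the decomposability of $P$, with no need to look inside the summands.
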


\begin{proof}
Any facet disjoint from $F_2$ must be contained in, and hence equal to, $F_1$.

If $F_1$ fails Shephard's property, there will be a vertex $u\in F_1$ adjacent to both vertices in $F_2$. The resulting triangle will be an indecomposable graph touching every facet, which implies indecomposability of $P$ by \cref{decomp}.

If $F_1$ has Shephard's property, then $P$ is decomposable, according to \cref{shp}.
Now suppose there is a vertex $a\in V_0$ which is adjacent to two distinct vertices $b,c\in V_1$. It follows that the five vertices $a,b,c, w_0,w_1$ are not contained in any plane. The graph $G$ comprising these six edges is isomorphic to the complete bipartite graph $K_{3,2}$. According to \cref{decomp}(iii), $G$ is indecomposable. By the remark at the beginning of this proof, $G$ touches every facet, contradicting the decomposability of $P$. Decomposability of $F_1$ now follows from \cite[Lemma 5.5]{PinUgoYos18}, but we repeat the short argument: the graph of $F_1$ also touches every facet, so \cref{decomp}(iv) is again applicable.
\end{proof}

We remark that in \cref{lem:twofaces}, the edge $F_2$ will actually be a summand of a polytope combinatorially equivalent to $F_1$. This can be deduced from the proof of \cite[Proposition 5]{PrzYos16}. Moreover, a generalisation of this result remains valid when $F_2$ is merely assumed to be an indecomposable face. We don't need these stronger versions, so we omit the details.

The next result improves \cite[Lemma 5.6(ii) and Remark 5.7]{PinUgoYos18}.

\begin{corollary}\label{cor:2out}
Let $F$ be a facet of a  $d$-polytope $P$,  with only two vertices $u_0,u_1$ of $P$ being outside $F$. Suppose $F$  is either $C_4$, $\Sigma_4$,  a pyramid over  $\Delta_{2,4}$, or $\Delta_{m,n}$ with $m\ge n\ge2$. Then $F$ fails Shephard's property in $P$, and $P$ is indecomposable. In case $F$ is  $\Delta_{m,n}$, there are at least $2(m+1)n$ edges running out of $F$.
\end{corollary}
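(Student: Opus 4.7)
The plan is to bring the hypothesis into the setting of \cref{lem:twofaces}. Since only $u_0, u_1$ lie outside $F$, \cref{lem:cutface} forces the induced subgraph on $\{u_0,u_1\}$ to be connected, so the two vertices are adjacent and $[u_0,u_1]$ is an edge face of $P$ disjoint from $F$, with $V(P)=V(F)\cup\{u_0,u_1\}$. Applying \cref{lem:twofaces} with $F_1=F$ and $F_2=[u_0,u_1]$ yields the equivalence: $P$ is decomposable if and only if $F$ has Shephard's property in $P$. It therefore suffices to prove that $F$ fails Shephard's property in each of the four cases, and indecomposability of $P$ follows at once.

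For $F$ equal to the pyramid over $\Delta_{2,4}$, $F$ is itself a pyramid and so indecomposable by \cref{decomp}(v); since Shephard's property would force $F$ to be decomposable (again by \cref{lem:twofaces}), Shephard's property cannot hold. The same route applies to $F=C_4$ once one verifies indecomposability of $C_4$, for instance by exhibiting an indecomposable subgraph touching every facet and invoking \cref{decomp}(iv). For $F=\Sigma_4$ and $F=\Delta_{m,n}$ the facet $F$ is itself decomposable (being a Minkowski sum of two simplices), so a direct argument is needed. Assume Shephard's property holds; then \cref{lem:twofaces} partitions $V(F)=V_0\sqcup V_1$ into nonempty sets, with the bipartite part of $F$'s edge graph between $V_0$ and $V_1$ having maximum degree one. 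For $F=\Delta_{m,n}$ the edge graph is the Cartesian product $K_{m+1}\square K_{n+1}$: connectedness produces a cross edge $\{v,w\}$ with $v\in V_0$, $w\in V_1$, lying in a common row (say). The matching condition on $v$ forces every other row-neighbour of $v$ into $V_0$, while the matching condition on $w$ forces every other row-neighbour of $w$ into $V_1$; any third vertex in that row (which exists because $m+1\ge 3$) then lies in both $V_0$ and $V_1$, contradicting disjointness. The symmetric column argument handles $n\ge 2$. A parallel inspection of the edge graph of $\Sigma_4$ (ten vertices, a unique non-simple vertex of degree six) rules out the matching condition for that case.

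For the quantitative statement when $F=\Delta_{m,n}$, write $V_i^{*}=N(u_i)\cap V(F)$, $N_i=|V_i^{*}|$, and $d_2=|V_0^{*}\cap V_1^{*}|$. Since $\deg_F(v)=m+n<d$, no $v\in V(F)$ can fail to be adjacent to at least one of $u_0,u_1$; hence $V_0^{*}\cup V_1^{*}=V(F)$ and $N_0+N_1=(m+1)(n+1)+d_2$, so the target $N_0+N_1\ge 2(m+1)n$ becomes $d_2\ge(m+1)(n-1)$. The approach is to analyze, for each ridge $R$ of $F$, the unique other facet $F_R$ of $P$: since $V(F_R)\subseteq V(R)\cup\{u_0,u_1\}$, one of three cases holds: (a) $F_R$ is a pyramid with apex $u_0$, whence $V(R)\subseteq V_0^{*}$; (b) the analogue for $u_1$; or (c) $F_R$ contains both $u_0$ and $u_1$, in which case a degree count inside $F_R$ (each $v\in V(R)$ has $\deg_{F_R}(v)\ge d-1$ while $\deg_R(v)=m+n-1$) forces every $v\in V(R)$ to be adjacent in $F_R$ to at least one of $u_0,u_1$. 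A key global observation is that if two distinct type-2 ridges (copies of $\Delta_{m,n-1}$) both fall in case (a), their vertex sets cover $V(F)$ (corresponding to disjoint columns of $\Delta_{m,n}$), forcing $N_0=(m+1)(n+1)$; the symmetric statements hold for $u_1$ and for the $m+1$ type-1 ridges.

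The main obstacle is the final quantitative combination of these local bounds: a naive ridge-by-ridge count yields only the trivial $d_2\ge 0$, so the proof must case-split on how many type-1 and type-2 ridges fall into each of (a)/(b)/(c) and exploit global incidence information of the kind just described. For the base case $n=2$ a direct count using the three type-2 ridges (where, by pigeonhole on cases (a)/(b)/(c), two must align favourably) appears to suffice, and for $n\ge 3$ one may hope to induct on $n$ by applying the corollary recursively to each $F_R$ in case (c), whose facet $\Delta_{m,n-1}$ satisfies the hypothesis exactly when $n\ge 3$.
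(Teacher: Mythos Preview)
Your setup via \cref{lem:cutface} and \cref{lem:twofaces} is correct, as is the pyramid-over-$\Delta_{2,4}$ case and your Cartesian-product colouring argument showing that $\Delta_{m,n}$ fails Shephard's property. The latter is a genuinely different (and perfectly valid) route from the paper's.

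However, there is a real gap in the quantitative claim that at least $2(m+1)n$ edges leave $F=\Delta_{m,n}$. You set up a ridge-by-ridge trichotomy and then candidly write that ``the main obstacle is the final quantitative combination,'' sketching a possible induction on $n$ without carrying it out. That is not a proof, and the induction you propose is awkward because the recursive application in case~(c) would require the \emph{corollary} for $F_R\cong\Delta_{m,n-1}$ inside a $(d-1)$-polytope, whereas you are in the middle of proving it. The paper avoids all of this with a single clean observation you are missing: pick any facet $F'$ of $P$ containing $u_0$ but not $u_1$; then every ridge of $F'$ not containing $u_0$ must lie in $F$ (its vertices miss both $u_i$), hence equals $F\cap F'$, so there is exactly one such ridge and $F'$ is a pyramid over it with apex $u_0$. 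That ridge is a facet of $\Delta_{m,n}$, hence has at least $(m+1)n$ vertices, all adjacent to $u_0$. The same works for $u_1$, giving $2(m+1)n$ edges immediately; and since $2(m+1)n>(m+1)(n+1)$ when $n\ge2$, Shephard's property fails as a corollary, making your separate colouring argument for $\Delta_{m,n}$ unnecessary.

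A secondary gap: for $C_4$ you assert indecomposability without proof, and for $\Sigma_4$ you appeal only to ``parallel inspection.'' The paper handles both uniformly by exhibiting a chain of four overlapping triangles whose union meets at least seven of the ten vertices, then observing that under the $2$-colouring forced by \cref{lem:twofaces} any triangle is monochromatic, so the chain forces a single colour on all ten vertices---contradicting nonemptiness of both $V_i$. You should either supply that argument or actually verify indecomposability of $C_4$ (which is not covered by \cref{decomp}(vi), since $C_4$ has $10>2d$ vertices).
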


\begin{proof} The two vertices outside $F$ must be adjacent, and so constitute an edge. By \cref{lem:twofaces}, failure of Shephard's property for $F$ is equivalent to indecomposability of $P$.

For the case of a pyramid over  $\Delta_{2,4}$, $F$ is indecomposable by \cref{decomp}(v), and then $P$ is indecomposable by \cref{decomp}(iv).

Suppose next that $F$ is  $C_4$ or $\Sigma_4$ . Inspection of the graphs (\cref{c4sigma4}) reveals that in both cases there are four  triangles $T_1,T_2,T_3,T_4$, with $T_i\cap T_{i+1}$ nonempty for $i=1,2,3$, whose union contains at least seven vertices.  Each of the three (or fewer) remaining vertices must then be adjacent to at least two vertices in this collection of triangles.
If $F$ had Shephard's property, then \cref{lem:twofaces} would allow us to  colour the vertices of $F$ with two colours in such a way that every vertex  is adjacent to at most one vertex of the other color. In particular, any three mutually adjacent vertices would have the same colour. In our situation, all vertices of $F$ would have the same colour, i.e. no such 2-coloring is  possible.

\begin{figure}[h]

\includegraphics[scale=1]{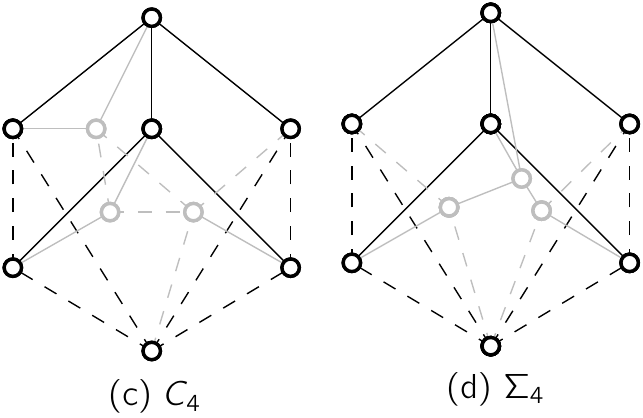}
\caption{Polytopes $C_4$ and $\Sigma_4$. In each polytope, four triangles $T_1,T_2,T_3,T_4$ with $T_i\cap T_{i+1}$ nonempty for $i=1,2,3$ are highlighted in dashed lines.}\label{c4sigma4}
\end{figure}

Finally, suppose $F=\Delta_{m,n}$; then $m+n=d-1$. We claim that there are least $2(m+1)n$ edges between $F$ and $u_0,u_1$.
Fix a facet $F'$ of  $P$ containing $u_0$ but not $u_1$. Let $R$ be an arbitrary ridge contained in $F'$ but not containing $u_0$. Clearly $R\subset F$ and $R\subset F'$, which forces $R=F\cap F'$. Thus $R$ is the unique ridge of $F'$ not containing $u_0$. This implies $F'$ is a pyramid over $R$ and $u_0$ is adjacent to every  vertex in $R$.  Being contained in $F$, $R$ must be either of the form $\Delta_{m-1,n}$, with $mn+m$ vertices, or of the form $\Delta_{m,n-1}$, with $mn+n$ vertices. So $u_0$ has at least $(m+1)n$ edges running into $F$. Likewise for $u_1$. Since $n>1$, we have $2mn+2n>(m+1)(n+1)=v(F)$, meaning that $F$ fails Shephard's property.\end{proof}

We  now have enough machinery to prove \cref{thm:2d+2}. We reformulate it slightly to streamline the proof.

\begin{theorem}\label{thm:2d+2rewritten} For $d\ge3$, the only $d$-polytopes with $2d+2$ vertices and $d^2+2d-3$ or fewer edges are $A_d$, $B_d$, $C_4$, $\Sigma_4$, $\Delta_{2,3}$, and the pyramid over $\Delta_{2,4}$. Each of these examples, except $\Delta_{2,3}$, has precisely $d^2+2d-3$ edges.
 \end{theorem}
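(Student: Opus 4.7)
The plan is to induct on $d$, with base cases $d=3$ from the polytope catalogues \cite{BriDun73,Fed74} and $d=4$ from \cite[Theorem 6.1]{PinUgoYos18}. For $d\ge 5$, let $P$ be a $d$-polytope with $2d+2$ vertices and $e(P)\le d^2+2d-3$; equivalently $\xi(P)\le 2d-6$ by (1). If $P$ is a pyramid, \cref{lem:pyramid} forces $d=7$ and identifies $P$ as the pyramid over $\Delta_{2,4}$. Otherwise every facet of $P$ has at most $2d+1$ vertices, and the argument splits on the number $v(F)$ of vertices in a largest facet $F$.

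The principal case is $v(F)=2d+1$. Applying \cref{thm:2d+1} to $F$ (with separate attention to low dimensions where its hypothesis fails), combined with \cref{lem:outside} for the three vertices outside $F$, the edge budget $d^2+2d-3$ forces $F$ to be a pentasm and the three outside vertices to be simple and mutually adjacent. \cref{lem:cutface,lem:3out,lem:simpleVertexOutside} then constrain how these three vertices attach to $F$: each edge from an outside vertex lands in the pentagonal or prismatic part of the pentasm in a uniquely determined combinatorial pattern. Comparison with the construction of $B_d$ in \cref{rmk:AdBd} and the facet inventory in \cref{rmk:Bd-Facets} forces $P=B_d$. If $F$ has $2d+1$ vertices but is not a pentasm, then \cref{lem:nopentasm} applies once one confirms that no other facet of $P$ has $2d$ vertices (else we fall into the next case).

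The secondary case is $v(F)=2d$, so $F$ is a $(d-1)$-polytope with $2(d-1)+2$ vertices, to which the inductive hypothesis applies. \cref{lem:outside} bounds the edges involving the two outside vertices from below, leaving a ceiling on $e(F)$ that, together with the inductive classification, forces $F$ to be one of the listed minimizers (or a close runner-up). \cref{cor:2out} rules out $F$ being $C_4$, $\Sigma_4$, the pyramid over $\Delta_{2,4}$, or any $\Delta_{m,n}$ with $m\ge n\ge 2$, since each fails Shephard's property and would require the two outside vertices to supply too many edges to $F$. The remaining candidates $F=A_{d-1}$ or $F=B_{d-1}$ are then extended in a unique way using the facet inventories in \cref{rmk:Ad-Facets,rmk:Bd-Facets}, yielding $P=A_d$ or $P=B_d$ respectively. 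When every facet has at most $2d-1$ vertices, \cref{lem:nopentasm} handles the top size $2d-1$, while a global edge counting argument using \cref{lem:simple} excludes $v(F)\le 2d-2$ except for the sporadic simple polytope $\Delta_{2,3}$ in dimension $5$, which is then verified directly to be the unique sub-minimizer.

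The hardest step is the pentasm-facet analysis: showing that once $F$ is a pentasm with three simple mutually adjacent vertices outside, the attachment is uniquely that of \cref{rmk:AdBd}, so that $P=B_d$ rather than some other combinatorial completion. A secondary difficulty is the uniform application of \cref{cor:2out} across the inductive list in the $v(F)=2d$ case; each candidate facet must be excluded individually before the structure collapses to $A_d$ or $B_d$.
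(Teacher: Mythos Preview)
Your case structure has a substantive error, not just a notational slip. When you write ``the principal case is $v(F)=2d+1$'' and speak of ``the three vertices outside $F$'', these cannot both hold: $P$ has $2d+2$ vertices, so a facet with $2d+1$ vertices leaves exactly one vertex outside and forces $P$ to be a pyramid (already handled by \cref{lem:pyramid}). Presumably you mean $v(F)=2d-1=2(d-1)+1$, so that \cref{thm:2d+1} applies to the $(d-1)$-polytope $F$ and there are three vertices outside. But then your claimed conclusion is wrong: when the \emph{largest} facet has $2d-1$ vertices and is a pentasm, the paper shows (Case~3A) that this situation is \emph{impossible}, not that it yields $B_d$. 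The argument there uses a second pentasm ridge and \cref{lem:simpleVertexOutside} to locate one extra edge beyond the budget; there is no combinatorial completion to $B_d$ in this case. Indeed $B_d$ does have pentasm facets, but it also has the larger facet $B_{d-1}$ with $2d$ vertices, so $B_d$ never falls into the ``largest facet has $2d-1$ vertices'' case at all.

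Consequently your ``secondary'' case $v(F)=2d$ is in fact the main one: both $A_d$ and $B_d$ arise there, via the inductive hypothesis on $F$ together with \cref{cor:2out} and the facet inventories --- and that part of your outline matches the paper's Case~2 well. Finally, your one-line dismissal of $v(F)\le 2d-2$ (``a global edge counting argument using \cref{lem:simple}'') is far too quick: the paper needs separate, nontrivial arguments when the maximal facet is a prism, when it is a non-prism with $2d-2$ vertices, and for each of the ranges $d+3\le v(F)\le 2d-3$, $v(F)=d+2$, $v(F)=d+1$, and the simplicial case, several of which require individual low-dimensional checks. The sporadic polytope $\Delta_{2,3}$ does not emerge from that small-facet analysis in the way you suggest.
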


\begin{proof} The case $d=3$ is well known and easy to prove: $A_3$ is the cube and $B_3$ is the 5-wedge. See also \cite{BriDun73} or \cite{Fed74}. The case $d=4$ was established in \cite[\S6]{PinUgoYos18}. We proceed by induction on $d$, henceforth assuming $d\ge5$.

Let $P$ be a $d$-polytope with $2d+2$ vertices, by hypothesis with excess degree at most $2d-6$.
We distinguish a number of cases based on the maximum number of vertices of the facets in $P$.
We will show in each case that $P$ either has one of the required forms or  has strictly more than $d^2+2d-3$ edges.

{\bf Case 1: Some facet has $2d+1$ vertices.}
In this case, $P$ is a pyramid and \cref{lem:pyramid} informs us that the base is  $\Delta_{2,4}$.

{\bf Case 2: $P$ is not a pyramid, but some facet $F$ has $2d=2(d-1)+2$ vertices.}

There are two adjacent vertices outside $F$, which we will  call $u_1$ and $u_2$, and at least $2d$ edges running out of $F$. Thus $F$ has at most $d^2+2d-3 -(2d+1)=(d-1)^2+2(d-1)-3$ edges.  By induction, $F$ must be $A_{d-1}$, $B_{d-1}$, $\Delta_{2,3}$, $C_4$, $\Sigma_4$ or a pyramid over $\Delta_{2,4}$. If $F$  were $\Delta_{3,2}$,  \cref{cor:2out} would ensure  at least 16 edges running out of $F$, giving $P$ at least $47>6^2+2\times6-3$ edges. In all the other cases, $F$ has exactly $(d-1)^2+2(d-1)-3$ edges, and so there are exactly $2d$ edges running out of $F$. Thus $F$ has Shephard's property, and \cref{cor:2out} rules out $C_4$, $\Sigma_4$ and the pyramid over $\Delta_{2,4}$ as options.

So $F$ is of the form $A_{d-1}$ or $B_{d-1}$: Recall their structure from \cref{rmk:Ad-Facets} and  \cref{rmk:Bd-Facets}. In both cases, there are $d+2$ ridges (of $P$) contained in $F$ and $d-4$ of them have same form, i.e. either $A_{d-2}$ or $B_{d-2}$ respectively. If the other facet $F'$ for one of these $d-4$ ridges were a pyramid, its apex, say $u_1$, would be adjacent to all $2d-2$ vertices in the ridge, giving $d^2-4+2d-2$ edges in the union of the two facets. But $u_2$ has degree at least $d$, which would give $P$ at least $d^2+3d-6>d^2+2d-3$ edges. So each such ``other facet" must contain both $u_1$ and $u_2$, i.e. $F'$ has $2(d-1)+2$ vertices.  It follows that $F'$ also has at most $(d^2+2d-3)-(2d+1)$ edges. The induction hypothesis tells us that $F'$ is also of the form $A_{d-1}$ or $B_{d-1}$, respectively.


Let $S_i$ and $Q_i$ be as in \cref{rmk:AdBd}.
The $d-4$ ridges referred to above each omit one of the $d-4$ edges linking $S_1$ and $S_2$. Given that their other facets have the same form as $F$ ($A_{d-1}$ or $B_{d-1}$ respectively), we can suppose that $u_1$ is adjacent to every vertex in $S_1$ and $Q_1$, and that $u_2$ is adjacent to every vertex in $S_2$ and $Q_2$. Thus $P$ has the same graph as $A_{d}$ or $B_{d}$. We claim this  ensures that $P$ is $A_d$ or $B_d$ respectively.

If $F$ is $A_{d-1}$, the  six ridges of $P$ contained in $F$ which are not of the form $A_{d-2}$ correspond to the six faces of the cube, i.e. each is the convex hull of one face of the cube and the $(d-3)$-prism. They are
 two copies of $M_{2,d-4}$ (the convex hull of $Q_i\cup S_i$ for $i=1,2$);
and four $(d-2)$-prisms (the convex hull of $S_1\cup S_2\cup E$, for each of the four edges $E$ linking $Q_1$ and $Q_2$). The other facets for these ridges are now easy to see. The other facet corresponding to each copy of $M_{2,d-4}$ is copy of $M_{2,d-3}$, namely the convex hull $Q_i, S_i $ and $u_i$. For each prism facet, the other facet is a $(d-2)$-prism, containing both $u_1$ and $u_2$. This completely describes the facet-vertex incidences of $P$; it is the same as $A_d$.

Likewise, if $F$ is $B_{d-1}$, the other six ridges of $P$ contained in $F$ correspond to the six faces of the 5-wedge. They are one copy of $M_{2,d-4}$, one $(d-2)$-prism, two simplices and two pentasms. A similar argument investigating their other facets determines the facet-vertex incidences. (Alternatively, since $P$ has only $d-3$ nonsimple vertices, we could apply \cite[Theorem 3.1]{DooNevPinUgoYos19}, which asserts that the face lattice in this case is determined by the 2-skeleton.)

{\bf Case 3A: No facet has $2d$ or more vertices, and some facet $F$  is a $(d-1)$-pentasm.}

We show that this case cannot arise. Recall that a $(d-1)$-pentasm has $2d-1=2(d-1)+1$ vertices and $d^2-d-1=(d-1)^2+(d-1)-1$ edges.

One of the $(d-2)$-faces of $F$ is a $(d-2)$-pentasm $R$, which has $(d-2)^2+(d-2)-1$ edges; there will then be $2d-2$ edges incident with vertices in $F\setminus R$, and one of the vertices in $F\setminus R$ will be nonsimple. We consider the other facet $F'$ of $P$ containing $R$.  As a pentasm, $R$ has excess degree $d-4$. If $F'$ were a pyramid, its apex would have degree $2d-3$, and hence excess $d-2$, in $F'$. This would give $F'$ excess degree $2d-6$. Now the excess degree of any facet of a nonsimple polytope is strictly less than the excess degree of the whole polytope \cite[Lemma 3.2]{PinUgoYos18};  thus $P$ would have excess degree at least $2d-5>2d-6$, contrary to hypothesis. So we can assume that $F'$ is not  a pyramid.

The facet $F'$ must contain only two of the three vertices $t, u,w$ outside $F$, say $t,u$, otherwise it would  contradict our choice of $F$.  Furthermore, the vertices $u,t$ must be adjacent. Then by \cref{thm:2d+1},
$F\cup F'$ contains at least $(d^2-d-1)+(2d-2)=d^2+d-3$ edges; note that this total does not include any edges between $F\setminus R$ and $F'\setminus R$. If the vertex $w$ outside  $F\cup F'$ is not simple, then $P$ will have at least  $(d^2+d-3)+(d+1)=d^2+2d-2$ edges, contrary to hypothesis. So we assume that $w$ is simple.

Clearly  the nonsimple vertex in $F\setminus R$ is adjacent to some vertex vertex outside $F$; if it is not adjacent to $w$, then it must be adjacent to $u$ or $t$. But if it is adjacent to $w$, it must, by \cref{lem:simpleVertexOutside}, be adjacent to another vertex outside $F$, which can only be one of $t,u$. In either case, this gives us an edge between $F\setminus R$ and $F'\setminus R$. Of course there are $d$ edges containing $w$, so again $P$  has at least $(d^2+d-3)+(d+1)=d^2+2d-2$ edges.

{\bf Case 3B: No facet has $2d$  vertices, no facet is a pentasm, but some facet $F$ has $2d-1=2(d-1)+1$ vertices.}

This case is settled by \cref{lem:nopentasm}.

{\bf Case 4A. No facet has $2d-1$ or more vertices, some facet $F$ has $2d-2=2(d-1)$ vertices, but is not a prism.}

We show that this case cannot arise; our argument  depends on the dimension. First note that $F$ has at least $(d-1)^2+(d-1)-3$ edges \cite[Theorem 19]{PinUgoYos19}, and by \cref{lem:outside} the four vertices outside $F$ belong to at least $4d-6$ edges. Thus $P$ has at least $d^2+3d-9$ edges, and for $d>6$, this exceeds $d^2+2d-3$.

If $d=6$, and $F$ has 28 edges or more, then $P$ has at least 46 edges. Otherwise,  $F$ has $27=5^2+2$ edges and must, by \cref{lem:minplus2edges}(i),  be a pyramid over $\Delta_{2,2}$. The other facet containing the ridge $\Delta_{2,2}$ has only only 10 vertices, so must also be a pyramid. Then these two facets have 36 edges between them, and the three remaining vertices must belong to at least 15 edges. This gives $P$ at least 51 edges.

Finally if $d=5$, then $F$ will have eight vertices and hence more than sixteen edges. It cannot have seventeen edges, thanks to \cite[Thm. 10.4.2]{Gru03}; see also \cite[remarks after Prop. 2.7]{PinUgoYosxix}. So $F$ has at least eighteen edges, and excess degree at least four. The excess degree of $P$ is strictly greater than that of $F$ \cite[Lemma 3.2]{PinUgoYos18}, and even, hence at least $6>2d-6$, contrary to hypothesis.

{\bf Case 4B. No facet has $2d-1$ or more vertices, some facet $F$ has $2d-2=2(d-1)$ vertices, and every such facet is a prism.}

We show that this case  does not arise either. Suppose $F$ is a simplicial $(d-1)$-prism. We can label its vertices as $\{u_1,\cdots,u_{d-1},w_1,\cdots,w_{d-1}\}$, so that  $u_i$ is adjacent to $u_j$ and $w_i$ is adjacent to $w_j$ for all $i,j$, but $u_i$ is adjacent to $w_j$ if and only if $i=j$. The graph and face lattice are clear from this notation.
Then the ridge  $R$ contained in $F$ with vertices  $\{u_1,\cdots,u_{d-2},w_1,\cdots,w_{d-2}\}$ is a simplicial $(d-2)$-prism. Consider the other facet $F'$ containing $R$.  If $F'$ were a pyramid, then $F\cup F'$ would contain $(d-1)^2+2(d-2)=d^2-3$ edges, and by \cref{lem:outside} the remaining three vertices would belong to at least $3d-3$ edges. But then $P$ would have at least $d^2+3d-6>d^2+2d-3$ edges. Then $F'$ must be another prism. We can label the two vertices of $F'\setminus R$ as $u_0$ and $w_0$, with the adjacency relationships clear from the notation. Now the graph of $F\cup F'$ has $d^2-2$ edges, and by \cref{lem:outside} again, there are at least $2d-1$ edges involving the other two vertices, say $a,b$. Since $d^2-2+2d-1=d^2+2d-3$, there must be precisely $2d-1$ edges involving $a$ and $b$, and no edge between $u_0$ and $u_{d-1},w_{d-1}$, nor between $w_0$ and $u_{d-1},w_{d-1}$.
In particular, $u_0,w_0,u_{d-1},w_{d-1}$ each have degree $d-1$ in $F\cup F'$, so each of them must be connected to either $a$ or $b$. On the other hand, there are at most $2d-1$ vertices in $F\cup F'$ adjacent to either $a$ or $b$. Without loss of generality, we suppose that $u_{d-3}$ is adjacent neither to $a$ nor to $b$.

Now consider the ridge $R'$ in $F'$ with vertices  $\{u_0,\cdots,u_{d-3},w_0,\cdots,w_{d-3}\}$. The other facet $F''$ containing $R'$ must be a prism. It cannot contain $u_{d-2}$ or $w_{d-2}$ because they belong to $F'$. Nor can it contain $u_{d-1}$ or $w_{d-1}$, because they are not adjacent to $u_0$ and $w_0$. And it cannot contain $a$ or $b$, because they are not adjacent to $u_{d-3}$. This is absurd.

{\bf Case 5. Some facet $F$ has between $d+3=d-1+4$ and $2d-3=d-1+d-2$ vertices, but no facet has $2d-2$ or more vertices.}

Denote by $n$ the number of vertices outside $F$; then  $5\le n\le d-1$. Then the facet $F$ has $(d-1)+(d-n+3)$ vertices and $d-n+3<d-1$, so \cite[Theorem 8(iii)]{PinUgoYos19} ensures that $F$ has excess at least $(d-n+2)(n-3-1)$.  Thus, the total number of edges in $F$ is at least $\h((d-1)(2d+2-n)+(d-n+2)(n-4))$, with equality only if $F$ is a triplex. \cref{lem:outside} informs us that the total number of edges outside $F$ is at least $\h(2nd-n^2+n)$. Thus the number of edges in $P$ is at least $d^2 +(n-2)d-(n^2-4n+5)$, which is $>d^2+2d-3$ provided
$$d>n+\frac{2}{n-4}.$$
If $n\ge7$, then $n+\frac{2}{n-4}<n+1\le d$, so we are fine. If $n=6$, we need to consider the case $d=7$ separately. If $n=5$, we need to consider the cases $d=6,7$ separately.

Note that $d-n+3=4$ or 5 in all three cases. If $F$ is not a triplex, then \cite[Theorems 19 and 20]{PinUgoYos19} guarantee $F$ has at least $\phi(2d+2-n,d-1)+2$ edges. So the total number of edges in $P$ is at least
$d^2 +(n-2)d-(n^2-4n)-3>d^2+2d-3$.

If $F$ is a triplex, it is a pyramid over some ridge $R$ which is also a triplex. By maximality, its other facet $F'$ must also be a triplex. Now $R$ has $(d-2)+(d-n+3)$ vertices, and hence $\phi(2d-n+1,d-2)=d^2-4d-\h( n^2-9n+12)$ edges. The two apices of $F,F'$ belong to $2(2d-n+3)$ edges in $F\cup F'$. The $n-1$ vertices outside $F\cup F'$ must belong to at least $(n-1)d-{n-1\choose2}$ edges. Adding these up gives a total of at least $d^2+(n-1)d-(n^2-4n+1)$ edges, and this exceeds $d^2+2d-3$ in each case of interest, namely $(n,d)=(6,7), (5,6)$, and $(5,7)$.

{\bf Case 6. Some facet $F$ has $d+2=d-1+3$ vertices, and no facet has more vertices.}

First suppose $F$ is a pyramid. Then the base is a ridge $R$ with exactly $d+1=d-2+3$ vertices, and so has at most six missing edges. The other facet $F'$ containing $R$ must also be a pyramid over $R$, and so the union of the two facets $F\cup F'$ has at most seven missing edges. The union has at least ${d+3\choose2}-7=\h(d^2+5d)-4$ edges. The $d-1$ vertices outside $F\cup F'$ belong to at least $(d-1)d-{d-1\choose2}=\h(d^2+d)-1$ edges. Thus $P$ has at least $d^2+3d-5>d^2+2d-3$ edges.

Next suppose that $F$ is simplicial. The Lower Bound Theorem ensures that $F$ has at least ${d+2\choose2}-3=\h(d^2+3d)-2$ edges. The $d$ vertices outside $F$ belong to at least $d^2-{d\choose2}=\h(d^2+d)$ edges. Thus $P$ has at least $d^2+2d-2$ edges.

If $F$ is neither simplicial nor a pyramid, then some ridge $R$ in $F$ has exactly $d$ vertices. Let $F'$ be the other facet containing $R$. We need to distinguish two cases, depending whether $F'$ has $d+1$ or $d+2$ vertices. In either case, $F$ has at least $\phi(d-1+3,d-1)=\h(d^2+3d)-5$ edges.

If $F'$ has $d+1$ vertices, it is a pyramid over $R$, and its apex belongs to $d$ edges in $F'$. Again by \cref{lem:outside}, the $d-1$ vertices outside $F\cup F'$ belong to at least $d(d-1)-{d-1\choose2}=\h(d^2+d)-1$ edges. Thus $P$ has at least $d^2+3d-6>d^2+2d-3$ edges.

If $F'$ has $d+2$ vertices, there are two vertices in $F'\setminus F$, which must belong to at least $2d-3$ edges in $F'$. Moreover, the $d-2$ vertices outside $F\cup F'$ belong to at least $d(d-2)-{d-2\choose2}=\h(d^2+d)-3$ edges. Thus $P$ has at least $d^2+4d-11>d^2+2d-3$ edges.

{\bf Case 7. Some facet $F$ has $d+1=(d-1)+2$ vertices, and no facet has more vertices.}

First consider the case when $F$ is simplicial; then it has at most one missing edge.  Let $R$ be any ridge in $F$, and let $G$ be the other facet facet containing $R$. Of course $R$ is a simplex, while $F'$ may have either $d$ or $d+1$ vertices.

If $F'$ has just $d$ vertices, then it is also a pyramid over $R$, whose apex is adjacent to every vertex in $R$ but possibly not adjacent to the two vertices in $F\setminus R$. With at most three missing edges, $F\cup F'$  has least ${d+2\choose2}-3$ edges and \cref{lem:outside} ensures that the $d$ vertices outside $F\cup F'$ belong to at least $d^2-{d\choose2}$ edges. This gives a total of at least $d^2+2d-2$ edges in $P$.

If on the other hand $F'$ has $d+1$ vertices, then the two vertices in $F'\setminus R$ belong to at least $2d-3$ edges in $F'$, giving $F\cup F'$ at least $\h(d^2+5d)-4$ edges. Again,  the $d-1$ vertices outside $F\cup F'$   contribute at least $\h(d^2+d)-1$ edges. Thus $P$ has at least $d^2+3d-5>d^2+2d-3$ edges.

Now suppose that  $F$ is not simplicial; then it is $M_{2,d-3}$. In particular, it contains a ridge $R$ with  $d$ vertices, and so must be a pyramid over $R$. Then $F'$ must have $d+1$ vertices and also be a pyramid. Now $R$ has at least $\phi(d,d-2)=\h(d^2-d)-2$ edges, the two apices belong to $2d$ edges in $F\cup F'$, and the $d$ vertices outside $F\cup F'$ belong to at least $\h(d^2+d)$ edges. This gives a total of at least $d^2+2d-2$ edges.

{\bf Case 8. Every facet $F$ has just $d$ vertices.}

Then $P$ is simplicial and the conclusion follows from the Lower Bound Theorem (\cref{thm:LBT}(: $P$ has at least ${d\choose 1}(2d+3)-{d+1\choose 2}=\frac{3}{2}d^2+\frac{5}{2}d>d^2+2d-3$ edges.

\end{proof}

\section{Polytopes with $2d+3$ or more vertices}

We know the exact value $\min E(v,d)$ for $v\le 2d$. Consider now the possibility of extending this result to polytopes with even more vertices. The problem of minimising  the number of edges, over a family of all $d$-polytopes which all have the same number of vertices, is the same as minimising the excess degree over the same family. Accordingly, we find it convenient to reformulate this problem in terms of the excess degree. Let us define, for $v>d$,
$$f(v)=f_d(v)=\min\{\xi(P): P {\rm\ is\ a\ } d{\rm-polytope\ with\ } v {\rm\ vertices}\}.$$
We will generally suppress the subscript $_d$, regarding the dimension as fixed. We know that $f(v)=(v-d-1)(2d-v)$ whenever $v\le2d$. Moreover, the minimising polytopes for such $v$ all have (at least four) simple vertices.
As we noted earlier, truncating a simple vertex of any polytope yields a new polytope with $d-1$ more vertices than the original, but the same excess degree. Applying this repeatedly to the aforementioned minimisin gpolytopes with $v\le2d$, we see that $f(v+n(d-1))\le (v-d-1)(2d-v)$ for all  $n$ and all $v\le2d$.  In particular, we have
$$f(v)\le (v-2d)(3d-1-v), \qquad{\rm when}\quad 2d\le v\le3d-1$$
and
$$f(v)\le (v-3d+1)(4d-2-v), \qquad{\rm when}\quad 3d-1\le v\le4d-2.$$

A {\it reasonable question} to pose is whether this inequality is actually an equality, at least for small values of $v$. We proved that equality holds for $v=2d+1$ in \cite{PinUgoYos19} (except when $d=4$), and for $v=2d+2$ in this paper (except when $d=5$).

Equality clearly holds when $v=3d-1$ because the expression on the right is zero. It holds when $v=3d$ (except when $d=4$ or 8),  because the expression on the right is then $d-2$, and it cannot be smaller than $d-2$ for any nonsimple polytope. There are simple polytopes with $3d$ vertices only when $d=4$ or 8 \cite[Lemma 2.19(vi)]{PinUgoYos18}.

It holds when $v=3d-2$ (except when $d=6$), again because the expression on the right is $d-2$, and there are simple polytopes with $3d-2$ vertices only when $d=6$ [\cref{lem:simple}].

Equality fails for $v=3d-3$. The expression on the right is then $2d-6$, but the existence of $\Delta_{2,d-2}$ shows that $f(3d-3)=0$.

Equality fails again for $v=3d-5$, at least when $d>7$. The expression on the right is then $4d-20$, but a pyramid over $\Delta_{2,d-3}$ has $3d-5$ vertices and excess $2d-6$. Although we do not know its exact value, we have the estimate $f(3d-5)\le2d-6$.

We have little idea about $v=3d-4$.

A more realistic question is whether the inequality above is actually an equality, for $v$ in the range $[2d+3,3d-6]$. As we have seen, some counterexamples arise in low dimensions, so we formulate the question cautiously, to exclude them. We are most reluctant to call this a conjecture.

{\bf Question.} For each $v$ in the range $2d+3\le v\le3d-6$, is there a constant $D=D_v$ such that, for all dimensions $d\ge D_v$, the equality $f_d(v)=(v-2d)(3d-1-v)$ holds? If so, is a truncation of the triplex $M_{v-d+1,2d-v-1}$ the only minimising polytope?

In the rest of this section, we point out a lower bound for $f_d(v), v\ge2d+3$, admittedly somewhat weaker than we would like. We assume $d>7$, otherwise the range of values under consideration for $v$ will be empty, and our conclusions vacuous.

Recall from \cref{lem:simple}  that a $d$-polytope with between $2d+3$ and $3d-4$ vertices cannot be simple. Indeed, any simple polytope, other than a simplex or prism, has at least $3d-3$ vertices.
It turns out that a similar restriction on the number of vertices applies to polytopes with excess degree $d-2$. We recall that the structure of such polytopes is very special \cite[Theorem 4.10]{PinUgoYos18}: Any $d$-polytope $P$ with excess degree precisely $d-2$ has two facets $F_1$ and $F_2$ such that  $F_1\cap F_2=S$ is a simplex face of dimension either 0 or $d-3$, and the nonsimple vertices of $P$ are precisely the vertices of $S$. Moreover, the facets $F_1$ and $F_2$ are both simple, every vertex in $S$ is adjacent only to vertices in $F_1\cup F_2$; and any two distinct facets of $P$ intersect either in a ridge, the empty set, or $S$.

This leads to the following result, whose proof we omit.

\begin{theorem}
Let $P$ be a $d$-polytope with  excess degree exactly $d-2$. Then the number of vertices of $P$ is either $d+2$, $2d-1$, $2d+1$ or $\ge3d-2$.
\end{theorem}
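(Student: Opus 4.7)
The plan is to invoke the detailed structural theorem for excess-$(d-2)$ polytopes just recalled, namely \cite[Theorem 4.10]{PinUgoYos18}. That theorem supplies two simple facets $F_1, F_2$ of $P$ whose intersection $S = F_1 \cap F_2$ is a simplex face of dimension $0$ or $d-3$, consisting of exactly the nonsimple vertices of $P$, with every vertex of $S$ adjacent only to vertices of $F_1 \cup F_2$, and with any two distinct facets of $P$ meeting in $\emptyset$, a ridge, or $S$. Writing $n_i = v(F_i)$ and $V_1 = V(P) \setminus (V(F_1) \cup V(F_2))$, we have
\[ v(P) = n_1 + n_2 - v(S) + |V_1|, \]
and every vertex in $V_1$ is simple. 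Applying \cref{lem:simple} to each simple $(d-1)$-polytope $F_i$ restricts
\[ n_i \in \{d,\ 2d-2,\ 3d-6\} \cup \{3d-5 : d = 7\} \cup [3d-4, \infty). \]

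Next I would enumerate by $\dim S$. When $\dim S = 0$, so $S = \{w\}$ and $\deg(w) = 2d-2$ with the $d-1$ neighbours of $w$ in $F_1$ disjoint from the $d-1$ neighbours in $F_2$, the choice $n_1 = n_2 = d$ with $|V_1| = 0$ forces $v(P) = 2d-1$, and since $w$ is then adjacent to every other vertex, \cite[Corollary 2.2]{PinUgoYos19} identifies $P$ as a pyramid over a simplicial $(d-1)$-prism. Every other admissible pair $(n_1,n_2)$ with $|V_1| = 0$ gives $v(P) \geq 3d-3$, and apart from the borderline value $3d-3$ coming from $\{n_1,n_2\} = \{d, 2d-2\}$ we already have $v(P) \geq 3d-2$. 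When $\dim S = d-3$, we have $v(S) = d-2$ and each $s \in S$ has exactly two neighbours in each of $F_1 \setminus S$ and $F_2 \setminus S$; the pair $n_1 = n_2 = d$ with $|V_1| = 0$ yields $v(P) = d+2$ (realised by $M_{2,d-2}$), the pair $(d, 2d-2)$ with $|V_1| = 1$ yields $v(P) = 2d+1$ (realised by the pentasm), and every other admissible configuration either gives $v(P) \geq 3d-2$ directly or produces one of the borderline values $2d$, $3d-4$, or (only for $d=7$) $3d-3$.

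The main obstacle is to eliminate these borderline values, which requires showing that $|V_1|$ must be large enough to lift $v(P)$ to at least $3d-2$. Here I would rely on the intersection property from the structural theorem, combined with a ridge count: if one of $F_1, F_2$ is a prism, a $\Delta_{2, d-3}$, or (for $d=7$) a $\Delta_{3,3}$, then each of its ridges through $S$ must coincide with a ridge of some further facet $G$ of $P$ that itself contains $S$ and is therefore simple. A second application of \cref{lem:simple} to $G$ restricts $G$ to a small list of combinatorial types, and I expect the resulting case analysis to show that in every such configuration either $P$ coincides with one of the three listed minimising examples ($M_{2,d-2}$, a pyramid over a $(d-1)$-prism, or the pentasm), or the incidences force enough extra simple vertices in $V_1$ to push $v(P)$ to $3d-2$. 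The $d=7$ sub-case with $F_i = \Delta_{3,3}$ and $\dim S = 4$ can actually be ruled out immediately, since the ridges of $\Delta_{3,3}$ are all combinatorially $\Delta_{1,3}$'s or $\Delta_{2,2}$'s, none of which is a $4$-simplex.
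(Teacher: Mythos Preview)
Your proposal follows essentially the same approach as the paper's (which in fact omits the proof and only states the idea): invoke the structural theorem to obtain two simple facets $F_1,F_2$ meeting in a simplex $S$ of dimension $0$ or $d-3$, restrict each $F_i$ via \cref{lem:simple} to the short list of simple $(d-1)$-polytopes with at most $3d-4$ vertices, and then enumerate the ways two such facets can share a vertex or a $(d-3)$-simplex. One small slip: in your final remark about $\Delta_{3,3}$ for $d=7$ you should speak of its $4$-dimensional faces rather than its ridges (which are $5$-dimensional), but the conclusion that $\Delta_{3,3}$ has no $4$-simplex face, and hence cannot occur when $\dim S=d-3$, is correct.
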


The idea of the proof is to note that if $P$ has $3d-3$ or fewer vertices, each of the facets $F_1$ and $F_2$ just described  must be either a simplex, a prism, $\Delta_{2,d-3}$ or $\Delta_{3,3}$. There are only a limited number of ways that two such facets can intersect in a single vertex, or a $(d-3)$-simplex. The argument can be tweaked to  characterise all $d$-polytopes with excess $d-2$ and up to $3d-2$ vertices: they are

$\bullet$ $M_{2,d-2}$ (with $d+2$ vertices),

$\bullet$ $M_{d-1,1}$ (with $2d-1$ vertices),

$\bullet$ the pentasm (with $2d+1$ vertices)

$\bullet$   $C_d$, $\Sigma_d$, $N_d$ and $A_4$ (with $3d-2$ vertices)

Furthermore, examples with excess $d-2$ and $3d-1$ vertices exist only in dimension 4; they are precisely the three examples discussed in the proof of \cite[Lemma A2]{PinUgoYos18}.

Finally, we can announce an estimate which is stronger than the Excess Theorem.

\begin{corollary}
For $d\ge7$, and  $2d+3\le v\le3d-4$, we have $f_d(v)\ge d$. That is, any $d$-polytope with  $v$ vertices in this range has excess degree at least $d$. In particular,  it must have at least $\h(v+1)d$ edges. The latter conclusion also holds for a $d$-polytope $P$ with $v=3d-3$ and $d\ge6$, provided $P$ is not the simple polytope $\Delta_{2,d-2}$.
\end{corollary}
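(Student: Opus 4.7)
The plan is to combine three ingredients already established in the paper: the non-simplicity constraint from \cref{lem:simple}, the Excess Theorem (\cref{thm:excess}), and the structural characterisation of polytopes with $\xi=d-2$ stated immediately before the corollary.

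First I would show that $P$ must be non-simple. For $d\ge7$ and $v$ in the range $[2d+3,3d-4]$, \cref{lem:simple} forbids simplicity: the admissible vertex counts of simple $d$-polytopes in this vicinity are $2d$ and $3d-3$ (plus $3d-2$ only when $d=6$), none of which lie in our interval. For the addendum on $v=3d-3$, the unique simple $d$-polytope with $3d-3$ vertices (when $d\ge6$) is $\Delta_{2,d-2}$, so excluding it forces $P$ to be non-simple. Next, the Excess Theorem yields $\xi(P)\ge d-2$, and the preceding characterisation theorem rules out equality, since it would force $v\in\{d+2,2d-1,2d+1\}\cup[3d-2,\infty)$, an impossibility for $v\in[2d+3,3d-4]$ or $v=3d-3$. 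Hence $\xi(P)\ge d-1$.

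To upgrade this to $\xi(P)\ge d$, I would use formula (1), namely $\xi=2e-dv$, which forces $\xi\equiv dv\pmod2$. When $d$ is even, $\xi$ is even and so $\xi\ne d-1$; when $d$ is odd and $v$ is odd, $\xi$ is odd and again $\xi\ne d-1$. In both situations $\xi\ge d$, and then $e=\h(\xi+dv)\ge\h d(v+1)$, which is precisely the desired edge bound.

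The main obstacle is the remaining configuration, $d$ odd and $v$ even, where parity only gives $\xi\ge d-1$. Ruling out $\xi=d-1$ here requires a structural analogue of the preceding theorem for excess $d-1$: one must show that any such polytope has vertex count outside $[2d+3,3d-4]$ and different from $3d-3$. I expect the argument to mirror the $\xi=d-2$ case, analysing the (very restricted) configurations of facets that can be responsible for the nonsimple vertices when the total excess is just $d-1$, and to be the step demanding most of the technical work; once available, it slots into the plan above to close all residual cases.
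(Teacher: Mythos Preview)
The paper does not actually prove this corollary; it is stated as an announcement at the very end of the final section, with no argument supplied. So there is no proof in the paper to compare your proposal against.

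That said, your outline is exactly the derivation the paper's setup invites: use \cref{lem:simple} to exclude simplicity, invoke \cref{thm:excess} to get $\xi\ge d-2$, apply the immediately preceding theorem to rule out $\xi=d-2$ for these vertex counts, and then use the parity of $\xi=2e-dv$ to push $\xi$ up to $d$. Your handling of the parity cases is accurate, and you correctly isolate the one configuration that the argument does not cover: $d$ odd and $v$ even, where $\xi=d-1$ is parity-consistent and must be excluded by other means. This case genuinely arises in the stated range (e.g.\ $d=9$, $v=22$; and for the addendum, $v=3d-3$ is even whenever $d$ is odd), so it is not a phantom.

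Your proposed fix---a structural restriction on the vertex counts of polytopes with $\xi=d-1$, parallel to the $\xi=d-2$ theorem---is the natural route, and the paper does cite a relevant tool: \cite[Theorem~4.18]{PinUgoYos18} shows that a polytope with excess exactly $d-1$ is a Shephard polytope (hence decomposable or a pyramid), which is the obvious starting point for such an analysis. But you are right that carrying this through to a vertex-count restriction is nontrivial and is not supplied here; the paper's use of the word ``announce'' signals that the authors regard the full argument as beyond the scope of this article. Your proposal is therefore an honest and accurate sketch, with the gap clearly flagged rather than papered over.
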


\end{document}